\let\pa=\partial
\let\e=\varepsilon
\def\N{\mathop{\mathbb N\kern 0pt}\nolimits}
\def\Q{\mathop{\mathbb Q\kern 0pt}\nolimits}
\def\R{\mathop{\mathbb R\kern 0pt}\nolimits}
\def\om{\omega}
\def\th{\theta}
\def\al{\alpha}
\def\e{\epsilon}
\def\ot{\omega^\theta}
\def\oto{\omega^{\theta,(1)}}
\def\ott{\omega^{\theta,(2)}}
\def\otl{\omega^{\theta,(\ell)}}
\def\ur{u^r}
\def\uz{u^z}
\def\cX{\mathcal{X}}
\def\cL{\mathcal{L}}
\def\cH{\mathcal{H}}
\def\cC{\mathcal{C}}
\def\dive{\mathop{\rm div}\nolimits}
\def\curl{\mathop{\rm curl}\nolimits}
\def\eqdefa{\buildrel\hbox{\footnotesize def}\over =}
\newtheorem{theorem}{Theorem}[section]
\newtheorem{lemma}{Lemma}[section]
\newtheorem{proposition}{Proposition}[section]
\newtheorem{definition}{Definition}[section]
\newtheorem{lem}{Lemma}[section]
\newtheorem{rmk}{Remark}[section]
\newtheorem{prop}{Proposition}[section]
\numberwithin{equation}{section}
\begin{document}
\title[Uniqueness of axisymmetric viscous flows]
{Uniqueness of axisymmetric viscous flows originating from positive linear combinations of circular vortex filaments}

\author[G. L\'evy]{Guillaume L\'evy}
\address[G. L\'evy]{Laboratoire Jacques-Louis Lions, UMR 7598, Universit\'{e} Pierre et Marie Curie, 75252 Paris Cedex 05, France.} \email{levy@ljll.math.upmc.fr}
\author[Y. Liu]{Yanlin Liu}
\address[Y. Liu]{Department of Mathematical Sciences, University of Science and Technology of China, Hefei 230026, CHINA,
and Academy of Mathematics $\&$ Systems Science, Chinese Academy of
Sciences, Beijing 100190, CHINA.} \email{liuyanlin3.14@126.com}

\begin{abstract}
Following the recent papers \cite{GS15} and \cite{GS16} by T. Gallay and V. \u{S}ver\'ak, in the line of work initiated by H. Feng and V. \u{S}ver\'ak
in their paper \cite{FengSverak}, we prove the uniqueness of a solution of the axisymmetric Navier-Stokes equations without swirl when the initial data is a 
positive linear combination of Dirac masses.
\end{abstract}

\maketitle

\noindent {\sl Keywords: Axisymmetric Navier-Stokes, fluid mechanics, vortex filaments}

\vskip 0.2cm
\noindent {\sl AMS Subject Classification (2000):} 35Q30, 76D03  \
\setcounter{equation}{0}
\section{Introduction}

In 3-D ideal fluids, a vortex ring is an axisymmetric flow whose vorticity is entirely concentrated in
a solid torus, which moves with constant speed along the symmetry axis. See \cite{AS89, F70, FB74, FT81}
for the existence of vortex ring solutions to the 3-D Euler equations.

However, for viscous fluids, the vortex ring solutions can not exist, since all localized structures
will be spread out by diffusion. Thus it is natural to consider the Navier-Stokes equations with
a vortex filament, and more generally with positive linear combinations of circular vortex filaments
which have a common axis of symmetry as initial data.

To state this precisely,
let us start with the Navier-Stokes equations in $\R^3$
\begin{equation}\label{NSu}
 \pa_t u+u\cdot\nabla u-\Delta u+\nabla p=0,
\quad \dive u=0,\qquad(t,x)\in\R^+\times\R^3,
\end{equation}
where $u(t,x)=(u^1,u^2,u^3)$ stands for the velocity field and $p$
the scalar pressure function of the fluid, which guarantees that the velocity field remains divergence free.

In the following, we restrict ourselves to the axisymmetric solutions without swirl 
of \eqref{NSu}, for which the velocity field $u$ and the vorticity $\omega\eqdefa \curl u$ take the particular form
$$u(t,x)=u^r(t,r,z)e_r+u^z(t,r,z)e_z,\quad
\omega(t,x)=\ot(t,r,z)e_\theta,$$
where $(r,\theta,z)$ denotes the cylindrical coordinates in $\R^3$
so that $x=(r\cos\theta,r\sin\theta,z)$,  and
$$e_r=(\cos\theta,\sin\theta,0),\ e_\theta=(-\sin\theta,\cos\theta,0),\ e_z=(0,0,1),\ r=\sqrt{x_1^2+x_2^2}.$$

In view of \cite{GS15}, we equip the half-plane $\Omega=\{(r,z)|r>0,z\in\R\}$
with the measure $drdz$. More precisely, for any measurable function $f:\Omega\rightarrow\R$, we denote
$$
\|f\|_{L^p(\Omega)}\eqdefa\Bigl(\int_\Omega|f(r,z)|^p drdz\Bigr)^{\frac 1p}<\infty,\  1\leq p<\infty,
$$
and $\|f\|_{L^\infty(\Omega)}$ to be the essential supremum of $|f|$ on $\Omega$.
For notational simplicity, we shall always denote a generic point in $\Omega$ by $x=(r,z)$.

Recalling the axisymmetric Biot-Savart law discussed in Section $2$ of \cite{GS15},
we know that for any given $\ot\in L^1(\Omega)\cap L^\infty(\Omega)$
which vanishes on $r=0$, the linear elliptic system
\begin{equation*}
\left\{
\begin{split}
& \pa_r u^r+\frac 1r u^r+\pa_z u^z=0,\quad \pa_z\ur-\pa_r\uz=\ot, \quad\mbox{on } \Omega,\\
& \ur|_{r=0}=0,\quad \pa_r\uz|_{r=0} =0,
\end{split}
\right.
\end{equation*}
has a unique solution $(\ur,\uz)\in C(\Omega)^2$ vanishing at infinity.
We denote this solution by $u=BS[\ot]$.
Hence we only need to study the equation for $\ot$:
\begin{equation}\label{NSom}
\pa_t \ot+(u^r\pa_r+u^z\pa_z) \ot-\frac{u^r \omega^\theta}{r}
=(\pa_r^2+\pa_z^2+\frac 1r\pa_r-\frac{1}{r^2})\omega^\theta.
\end{equation}

Now let us discuss the initial condition.
We first recall from \cite{GS15} that, the axisymmetric vorticity equation \eqref{NSom} is globally well-posed
whenever the initial vorticity is in $L^1(\Omega)$.
As a natural extension, then they considered the initial vorticity in $\mathcal{M}(\Omega)$,
which denotes the set of all real-valued finite regular measures on $\Omega$,
equipped with the total variation norm
$$\|\mu\|_{\rm{tv}}\eqdefa\sup\Bigl\{\int_\Omega\phi\, d\mu\,\Big|\,\phi\in C_0(\Omega),
\|\phi\|_{L^\infty(\Omega)}\leq 1\Bigr\},$$
where $C_0(\Omega)$ denotes the set of all real-valued continuous functions on $\Omega$
that vanishes at infinity and on the boundary $\pa\Omega$. It is also proved in \cite{GS15} that
\eqref{NSom} is globally well-posed if the initial vorticity $\mu$ is in $\mathcal{M}(\Omega)$
whose atomic part is small enough.

As mentioned in the first paragraph of the introduction, we focus here on the particular case 
$$\mu=\sum\limits_{i=1}^n\alpha_i\delta_{x_i},$$
where $\alpha_i$ is some positive constant and $\delta_{x_i}$ is the Dirac mass at point $x_i=(r_i,z_i)\in\Omega$
with $r_i>0$. Such a $\mu$ is purely atomic and we deduce from \cite{GS15} that
\eqref{NSom} is global well-posed provided that 
$$\|\mu\|_{\rm{tv}}=\sum\limits_{i=1}^n\alpha_i$$
is small enough. On the other hand, for arbitrary positive values of $\alpha_i$,
\cite{FengSverak} gives the existence of a global mild solution, and \cite{GS16} proves
the uniqueness when $n=1$. In this paper, we prove the uniqueness for general $n$.
Our result can be stated as follows:
\begin{theorem}\label{thmmain}
{\sl Fix an integer $n$.
 Let 
 $$\mu=\sum\limits_{i=1}^n\alpha_i\delta_{x_i},$$
where $\alpha_i$ is some positive constant and $\delta_{x_i}$ is the Dirac mass at point $x_i=(r_i,z_i)\in\Omega$
with $r_i>0$.
 Then \eqref{NSom} has a unique global solution $\ot$ in 
 $\cC\bigl(]0,\infty[, L^1(\Omega) \cap L^{\infty}(\Omega)\bigr)$
 in the mild sense (see Definition \ref{defmild}),
 satisfying
 \begin{equation}\label{thmcondi}
 \sup_{t>0}\|\ot(t)\|_{L^1(\Omega)} < \infty,\quad\mbox{and}\quad \ot(t) dr dz \rightharpoonup \mu  \quad\mbox{as}\quad t \rightarrow 0.
 \end{equation}
 Moreover, there exists some constant $C_0$ depending only on
 $(\alpha_i,x_i)_{i=1}^n$,
such that whenever $\sqrt{t}\leq\frac12\min\limits_{1\leq i<j\leq n}\bigl\{|x_i-x_j|,r_i\bigr\}$, there holds the following short time estimate:
 \begin{equation}\label{ShortTimeAsymptotics}
   \Bigl\|\ot(t,\cdot) - \frac{1}{4\pi t}\sum_{i=1}^n \alpha_i e^{- \frac{|\cdot-x_i|^2}{4t}}\Bigr\|_{L^1(\Omega)}
   \leq C_0\sqrt t |\ln t|.
 \end{equation}
}\end{theorem}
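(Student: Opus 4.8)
The plan is to follow the strategy of \cite{GS16}, which settles the case $n=1$, and to adapt it to a positive combination of several well-separated filaments. The first reduction is to short time: since \eqref{NSom} is globally well-posed in $L^1(\Omega)$ by \cite{GS15}, it suffices to prove that two solutions $\ot_1,\ot_2$ in the class of Theorem~\ref{thmmain} coincide on some interval $]0,T]$, after which forward uniqueness in $L^1(\Omega)$ propagates the equality to all positive times. I would work throughout with the mild formulation of Definition~\ref{defmild}, written as
\begin{equation*}
\ot(t)=e^{tL}\mu-\int_0^t e^{(t-s)L}\dive\bigl(u(s)\,\ot(s)\bigr)\,ds,\qquad u(s)=BS[\ot(s)],
\end{equation*}
where $L=\pa_r^2+\pa_z^2+\frac1r\pa_r-\frac1{r^2}$ is the linear part of \eqref{NSom} (the $3$-D vector Laplacian restricted to axisymmetric $e_\theta$-fields) and where incompressibility has been used to recast the transport and stretching terms as $\dive(u\,\ot)$ in the flat variables $(r,z)$. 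The kernel of $e^{tL}$ is explicit in terms of modified Bessel functions; near any fixed $r_0>0$ it agrees with the $2$-D heat kernel, centred at the corresponding point, up to an $O(\sqrt t/r_0)$ error in $L^1$, so in particular $e^{tL}\mu=\frac{1}{4\pi t}\sum_i\alpha_i e^{-|\cdot-x_i|^2/(4t)}+O(\sqrt t/\min_i r_i)$ in $L^1(\Omega)$ for $\sqrt t$ small.

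Next I would collect a priori information on an arbitrary solution $\ot$ satisfying \eqref{thmcondi}. Since $\mu\ge0$, the maximum principle for the source-free transport-diffusion equation satisfied by $\ot/r$ forces $\ot(t)\ge0$, so that $\|\ot(t)\|_{L^1(\Omega)}=\int_\Omega\ot(t)\,dr\,dz$ is a linear functional of $\ot(t)$. Combining this with $\sup_{t>0}\|\ot(t)\|_{L^1}<\infty$ and the weak convergence $\ot(t)\,dr\,dz\rightharpoonup\mu$, a tightness and concentration argument in the spirit of \cite{FengSverak} shows that, for small $\rho>0$, $\int_{B(x_i,\rho)}\ot(t)\to\alpha_i$ while $\int_{\Omega\setminus\bigcup_j B(x_j,\rho)}\ot(t)\to0$ as $t\to0^+$. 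Bootstrapping the Duhamel formula would upgrade this to a uniform Gaussian bound $\ot(t,x)\le Ct^{-1}\sum_i e^{-|x-x_i|^2/(Ct)}$ for $\sqrt t$ below the separation scale in the statement and, in the parabolic variables $\xi=(r-r_i)/\sqrt t$, $\eta=(z-z_i)/\sqrt t$ near $x_i$, to the fact that the rescaled vorticity $t\,\ot$ is bounded in a Gaussian-weighted space and converges, as $t\to0^+$, to the $2$-D Oseen profile $\frac{\alpha_i}{4\pi}e^{-(\xi^2+\eta^2)/4}$; in particular its mass and first moments there tend to those of the profile.

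Armed with this, set $w=\ot_1-\ot_2$ and $v=BS[w]=BS[\ot_1]-BS[\ot_2]$; then $w$ solves the \emph{linear} equation $\pa_t w-Lw+\dive(u_1 w)+\dive(v\,\ot_2)=0$. Rewritten near each $x_i$ in the parabolic variables, with $\tau=\ln t$, its principal part is $\pa_\tau W-\mathcal{L}W+\Lambda_i W$, where $\mathcal{L}=\Delta_{\xi,\eta}+\frac12(\xi,\eta)\cdot\nabla+1$ is the Oseen operator and $\Lambda_i W=v_{G_i}\cdot\nabla W+BS_i[W]\cdot\nabla\omega_{G_i}$ is the advection linearized about the $2$-D Oseen vortex of circulation $\alpha_i$. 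By the previous paragraph $W$ has, near each $x_i$, vanishing mass and first moments up to $o(1)$ errors, so it essentially lives in the subspace on which $\mathcal{L}-\Lambda_i$ has a spectral gap; this yields an inequality $\frac{d}{d\tau}\|W\|_X^2\le-\delta\|W\|_X^2+R(\tau)\|W\|_X$ in a suitable Gaussian-weighted norm $X$. The remainder $R$ gathers: the curvature corrections from working in cylindrical coordinates at scale $\sqrt t$ near $r=r_i$ (of relative size $O(\sqrt t/r_i)$ or $O(t/r_i^2)$); the $O(\sqrt t/r_i)$ Bessel-versus-heat-kernel mismatch; the terms measuring the a priori closeness of $\ot_2$ and $u_1$ to the Oseen vortex and its velocity, which are $o(1)$ as $t\to0$; and the coupling between distinct filaments, which by the Gaussian bounds and the decay of $BS$ is $O(e^{-c|x_i-x_j|^2/t})$ and negligible on the time range considered. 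A Gronwall argument closed with the a priori bounds then forces $W\equiv0$ for small $t$, proving uniqueness. The short-time estimate \eqref{ShortTimeAsymptotics} follows by feeding the Duhamel formula with $w=\ot-\frac{1}{4\pi t}\sum_i\alpha_i e^{-|\cdot-x_i|^2/(4t)}$: the linear part contributes $O(\sqrt t/\min_i r_i)$, and the Duhamel integral is estimated using the Gaussian bounds together with the decay of the Oseen velocity; the factor $|\ln t|$ appears because the effective drift of the $i$-th Gaussian — the self-induced velocity of a thin ring of core $\sqrt s$ and radius $r_i$ — carries the Biot--Savart logarithm $\ln(r_i/\sqrt s)$, whose time integral produces an $L^1$ error of order $\sqrt t\,|\ln t|$.

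The hard part will be the coercivity step: establishing that $\mathcal{L}-\Lambda_i$ has a spectral gap on the codimension-three subspace of functions with zero mass and zero first moments (for \emph{arbitrary} circulation $\alpha_i$, as in the $n=1$ analysis of \cite{GS16}), and then checking that each perturbative contribution above — the linearized advection $\Lambda_i$ itself, the cylindrical corrections, and the terms comparing the actual solutions with the Oseen vortex — can be absorbed in the norm $X$ without destroying the gap. This is exactly where the positivity of the $\alpha_i$ is indispensable: it pins down the correct leading profile near each $x_i$ and forces $\ot\ge0$, hence the $L^1$ identity underpinning the localization and the a priori bounds. By contrast, the passage from $n=1$ to general $n$ is not the essential difficulty, because $n$ and $\min_{i\ne j}\{|x_i-x_j|,r_i\}$ are fixed, so the inter-filament interaction stays uniformly exponentially small on the relevant interval.
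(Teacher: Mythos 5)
Your overall strategy---reduce to short time, identify the local profile near each $x_i$ as the two-dimensional Oseen vortex in self-similar variables, run a weighted energy/coercivity estimate for the difference, and treat the inter-filament coupling as exponentially small---is the right one and matches the paper in spirit. But there are two concrete gaps. The first is the localization of the energy estimate. You propose to take the global difference $w=\ot_1-\ot_2$ and study it ``near each $x_i$'' in parabolic variables with the Gaussian weight $e^{|X|^2/4}$. For $n\geq 2$ this does not close: the weight is centred at one filament, so any cutoff-based localization produces commutator and transition-region terms located at distance $\sim d$ from $x_i$, where the weight is of size $e^{cd^2/t}$; controlling them requires knowing in advance that $w$ is super-exponentially small there, which is circular. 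The paper's way out is to decompose the \emph{solution itself} (not merely the data) as $\ot=\sum_i\ot_i$ with $\ot_i=\al_i\widetilde{\Phi}(t,\cdot\,;0,x_i)$, where $\widetilde{\Phi}$ is Aronson's fundamental solution of the advection--diffusion equation with the full velocity $u$ as drift. Each piece then carries a genuine Gaussian bound $\ot_i\lesssim t^{-1}e^{-(1-\eta)|x-x_i|^2/4t}$ with a constant in the exponent sharp enough to beat the weight $e^{|X|^2/4}$ (this requires $\eta<1/2$, not just the generic constant $C$ in your bound $e^{-|x-x_i|^2/(Ct)}$), and the far-field contributions become $O(e^{-d^2/(256t)})$. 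This decomposition is the main structural novelty needed to pass from $n=1$ to general $n$, and it is absent from your outline.

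The second gap is the endgame. Your differential inequality $\frac{d}{d\tau}\|W\|_X^2\leq-\delta\|W\|_X^2+R(\tau)\|W\|_X$ cannot yield $W\equiv0$ by Gronwall as long as $R\not\equiv0$, and $R$ is unavoidably nonzero here: the inter-filament interaction and the boundary $r=0$ contribute an absolute (not $\|W\|$-proportional) remainder of size $e^{-C_0/t}$ even after the source terms cancel in the difference equation. A single Gronwall therefore only gives $E^\Delta(t)\leq e^{-C_0/t}$. The paper needs a second, \emph{homogeneous} differential inequality $t\frac{d}{dt}E^\Delta\leq C_0E^\Delta$ (with a possibly large constant, obtained by estimating the interaction terms crudely but without inhomogeneous remainders), which integrates to $E^\Delta(t)\leq(t/t')^{C_0}E^\Delta(t')$; combining this with the super-exponential smallness from the first step and letting $t'\to0$ is what forces $E^\Delta\equiv0$. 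Relatedly, your appeal to a spectral gap of $\mathcal{L}-\Lambda_i$ on the zero-mass, zero-first-moment subspace ``up to $o(1)$ errors'' needs to be made quantitative: those moment errors re-enter the inequality as source terms of size $\sqrt t|\ln t|$, which is precisely where the rate in \eqref{ShortTimeAsymptotics} comes from, and the paper handles this by subtracting an explicit cutoff Gaussian $f_0$ and importing the coercivity estimate for the resulting perturbation from \cite{GS16}. Your heuristics for the origin of the $|\ln t|$ (the self-induced ring velocity) and for the role of positivity are correct.
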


Let us end this section with some notations. We use $C$ (resp. $C_0$) to denote some absolute positive constant
(resp. some positive constant depending on $(\alpha_i,x_i)_{i=1}^n$), which may be different in each occurrence.
$f\lesssim g$ means that there exists some constant $C$ such that $f\leq Cg$.
For a Banach space $B$, we shall use the shorthand $\|u\|_{L^p_TB}$ for the norm $\bigl\|\|u(t,\cdot)\|_B\bigr\|_{L^p(0,T)}$.

\section{Decomposition of the solution}

In order to use the uniqueness result for the case when the initial measure
 is one single Dirac mass which has been proved in \cite{GS16},
a natural thought is to decompose the solution into $n$ parts: 
$$\ot=\sum\limits_{i=1}^n\ot_i,$$
 according to the decomposition of the
initial measure 
$$\mu=\sum\limits_{i=1}^n\alpha_i\delta_{x_i}.$$ 
The nonlinearity of the equation \eqref{NSom} renders this idea nontrivial to implement.
The strategy is to use the fundamental solution of some advection-diffusion equation.
This will be done in the first subsection.

The purpose of the second subsection will be to show that, at least for short times, $\ot_i$ is very close -- in the $L^1(\Omega)$ sense -- to the Oseen vortex located at $x_i$ 
with circulation $\al_i$. 
This goal will be achieved using self-similar variables around the point $x_i$.

\subsection{The linear semigroup and the trace of the solution at initial time}

Let us denote by $\bigl(S(t)\bigr)_{t\geq0}$ the evolution semigroup
defined by the linearized system of \eqref{NSom}, namely
\begin{equation}\label{linear}
\left\{
\begin{split}
& \pa_t\ot-\bigl(\pa_r^2+\pa_z^2+\frac 1r \pa_r-\frac{1}{r^2}\bigr)\ot=0,   \quad(t,r,z)\in\R^+\times\Omega,\\
& \ot|_{r=0}=0,\quad \ot|_{t=0} =\ot_0.
\end{split}
\right.
\end{equation}
One can see Section $3$ of \cite{GS15} for a detailed study of this semigroup.

By using $\bigl(S(t)\bigr)_{t\geq0}$, we can define the mild solutions of \eqref{NSom} in the following way:
\begin{definition}\label{defmild}
{\sl Let $T>0$, we say that
 $\ot\in C\bigl(]0,T[, L^1(\Omega) \cap L^{\infty}(\Omega)\bigr)$
 is a mild solution of \eqref{NSom} on $]0,T[$, if for any $0 < t_0 < t < T$, there holds the following integral equation
 \begin{equation}\label{integraleqt}
  \ot(t) = S(t-t_0)\ot(t_0) - \int_{t_0}^t S(t-s)\dive_*
  \bigl(u(s)\ot(s)\bigr) \, ds.
 \end{equation}
 Here $u= \rm{BS}[\ot]$ and $\dive_*
  \bigl(u\ot\bigr)\eqdefa\pa_r(\ur\ot)+\pa_z(\uz\ot)$.
}\end{definition}

Before proceeding further, let us recall some {\it a priori} estimates for the mild solution.
\begin{lem}
{\sl Let $\ot$ be a mild solution of \eqref{NSom} on $(0,T)$ satisfying \eqref{thmcondi},
$u= \rm{BS}[\ot]$. It is shown in Estimates $(2.13),~(2.14)$ of \cite{GS16} that,
for any $t\in]0,T[,$ and any $k,\ell\in\N$, there holds
\begin{equation}\label{apriorimild}
t^{k+\frac{\ell}2+\frac12}\|\pa_t^k\nabla_x^\ell u(t)\|_{L^\infty(\Omega)}+t^{\frac32}\|\nabla\ot(t)\|_{L^\infty(\Omega)}\leq C_0.
\end{equation}
Moreover, we can deduce from Estimate $(9)$ of \cite{GS15} that
\begin{equation}\label{limitott0}
\lim\limits_{t\rightarrow 0}t^{1-\frac1p}\|\ot(t)\|_{L^p(\Omega)}=0,\quad\mbox{for any}\quad
1<p\leq\infty.
\end{equation}
}\end{lem}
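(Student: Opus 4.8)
The plan is to treat \eqref{apriorimild}--\eqref{limitott0} as consequences of the smoothing properties of the linear semigroup $\bigl(S(t)\bigr)_{t\ge0}$ of \eqref{linear}, combined with the Biot--Savart law and the Duhamel formula \eqref{integraleqt}. The kernel bounds for $S(t)$ recalled in Section~3 of \cite{GS15} furnish, for $1\le p\le q\le\infty$, the smoothing estimates $\|S(\tau)f\|_{L^q(\Omega)}\lesssim \tau^{-(\frac1p-\frac1q)}\|f\|_{L^p(\Omega)}$ and $\|\nabla S(\tau)f\|_{L^q(\Omega)}\lesssim \tau^{-\frac12-(\frac1p-\frac1q)}\|f\|_{L^p(\Omega)}$, together with the analyticity bound $\|\pa_t^k S(\tau)f\|_{L^q(\Omega)}\lesssim \tau^{-k-(\frac1p-\frac1q)}\|f\|_{L^p(\Omega)}$. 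Since $\dive_*$ is a first order operator, $S(\tau)\dive_*$ loses only half a derivative, which is the crucial point keeping the time integral in \eqref{integraleqt} convergent.

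First I would establish the $L^\infty$ control $t\|\ot(t)\|_{L^\infty(\Omega)}\le C_0$, which underlies both the base case $k=\ell=0$ of \eqref{apriorimild} and the interpolation yielding \eqref{limitott0}. Starting from the uniform bound $M\eqdefa\sup_{t}\|\ot(t)\|_{L^1(\Omega)}<\infty$ granted by \eqref{thmcondi}, I would run \eqref{integraleqt} from $t_0\to0$: the linear part obeys $\|S(t-t_0)\ot(t_0)\|_{L^\infty(\Omega)}\lesssim (t-t_0)^{-1}M$, while the Duhamel term is bounded, through the half-derivative-loss estimate, by $\int_{t_0}^t(t-s)^{-\frac12}\|u(s)\|_{L^\infty(\Omega)}\|\ot(s)\|_{L^\infty(\Omega)}\,ds$. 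Feeding in the Biot--Savart interpolation $\|u(s)\|_{L^\infty(\Omega)}\lesssim \|\ot(s)\|_{L^1(\Omega)}^{\frac12}\|\ot(s)\|_{L^\infty(\Omega)}^{\frac12}$ and closing a continuation argument on the scale invariant quantity $\sup_{s\le t}s\|\ot(s)\|_{L^\infty(\Omega)}$ then yields $t\|\ot(t)\|_{L^\infty(\Omega)}\le C_0$ and hence $t^{\frac12}\|u(t)\|_{L^\infty(\Omega)}\le C_0$.

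The general estimate \eqref{apriorimild} is then obtained by induction on $(k,\ell)$. Applying $\nabla_x^\ell$ to \eqref{integraleqt} and distributing the derivatives, each spatial derivative hitting $S$ costs a factor $(t-s)^{-\frac12}$ via the gradient smoothing, while derivatives falling on the nonlinearity are re-expressed through the already controlled lower order factors and the Biot--Savart derivative bounds; this produces in particular $t^{\frac32}\|\nabla\ot(t)\|_{L^\infty(\Omega)}\le C_0$. Time derivatives are handled either by the analyticity bound above or, preferably, by converting $\pa_t$ into spatial operators through \eqref{NSom} and \eqref{linear} and iterating, which keeps the homogeneity $t^{k+\frac\ell2+\frac12}$ exact. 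Finally, for \eqref{limitott0}, interpolating $\|\ot(t)\|_{L^p(\Omega)}\le\|\ot(t)\|_{L^1(\Omega)}^{\frac1p}\|\ot(t)\|_{L^\infty(\Omega)}^{1-\frac1p}$ and inserting the $L^1$ and $L^\infty$ bounds already gives the uniform bound $t^{1-\frac1p}\|\ot(t)\|_{L^p(\Omega)}\le C_0$ for $1<p\le\infty$; the improvement of this to the null limit is precisely the content of Estimate~(9) of \cite{GS15}, which I would import directly.

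The main obstacle is closing the nonlinear bootstrap for \eqref{apriorimild}: one must control the Duhamel integral uniformly as the initial time $t_0\to0$ despite the singular factor $(t-s)^{-\frac12}$, verify that the Biot--Savart estimates survive near the axis $r=0$ where the $-\frac1{r^2}$ term and the boundary condition $\ot|_{r=0}=0$ matter (this is where the refined kernel bounds of \cite{GS15} are essential), and organise the induction so that the exact homogeneity $t^{k+\frac\ell2+\frac12}$ is preserved at every order; the time-derivative conversion in particular requires care in commuting $\pa_t$ with $\dive_*$ and ${\rm BS}$.
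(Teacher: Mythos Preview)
The paper does not supply a proof of this lemma at all: it is stated purely as a quotation, with \eqref{apriorimild} attributed to Estimates~(2.13)--(2.14) of \cite{GS16} and \eqref{limitott0} to Estimate~(9) of \cite{GS15}. Your sketch therefore goes well beyond what the paper itself does, and the strategy you outline---semigroup smoothing for $S(t)$, the Biot--Savart interpolation $\|u\|_{L^\infty}\lesssim\|\ot\|_{L^1}^{1/2}\|\ot\|_{L^\infty}^{1/2}$, and a Duhamel bootstrap on the scale-invariant quantities---is indeed the method used in those references.

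One point in your sketch deserves correction. In your first step you propose to run \eqref{integraleqt} from $t_0\to0$ and bound the Duhamel term in $L^\infty$; but with $\|u(s)\|_{L^\infty}\lesssim s^{-1/2}$ and $\|\ot(s)\|_{L^\infty}\lesssim s^{-1}$ the resulting integral $\int_0^t(t-s)^{-1/2}s^{-3/2}\,ds$ diverges at $s=0$, so the bootstrap does not close as written. The standard remedy---visible in this very paper in the proof of \eqref{boundnablaoti}---is to run Duhamel from $t_0=t/2$, so that all factors of $s$ are comparable to $t$ and the time integral is finite. Absorbing the nonlinear term then requires the \emph{smallness} $\sup_{0<s<t}s^{1/2}\|u(s)\|_{L^\infty}\to0$, i.e.\ the $p=\infty$ case of \eqref{limitott0}; thus the logical order is the reverse of what you wrote: one first imports \eqref{limitott0} from \cite{GS15} (where it is built into the fixed-point space), and only then bootstraps the derivative bounds \eqref{apriorimild}.
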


Combining the conclusions of Corollary $2.9,~2.10$ and Remark $2.11$ in \cite{GS16}, we prove the following.
\begin{prop}\label{propconvergence}
{\sl For any $T>0$, if $\ot\in C\bigl((0,T), L^1(\Omega) \cap L^{\infty}(\Omega)\bigr)$
is a mild solution of \eqref{NSom} on $(0,T)$ satisfying \eqref{thmcondi}, then for any $t\in(0,T)$
and $(r,z)\in\Omega$, we have
\begin{equation}\label{boundmild}
\ot(t,r,z)\geq0,\quad \|\ot(t)\|_{L^1(\Omega)}
\leq\|\mu\|_{\rm{tv}}\quad\mbox{and}\quad
\lim\limits_{t\rightarrow 0}\|\ot(t)\|_{L^1(\Omega)}=\|\mu\|_{\rm{tv}}.
\end{equation}
Moreover, for any bounded and continuous function $\phi$ on $\Omega$, there holds the convergence
\begin{equation}\label{convergence}
\int_{\Omega}\phi(r,z)\ot(t,r,z)\, drdz\rightarrow
\int_{\Omega}\phi\, d\mu,\quad\mbox{as}\quad t\rightarrow0.
\end{equation}
}\end{prop}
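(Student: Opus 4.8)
The plan is to observe that the three conclusions of the proposition are, respectively, Corollary~2.9, Corollary~2.10 and Remark~2.11 of \cite{GS16}, and that the proofs given there use only that $\mu$ is a \emph{nonnegative} finite regular measure on $\Omega$ (here $\mu=\sum_{i=1}^n\al_i\delta_{x_i}\ge0$), that $\ot$ satisfies the integral identity \eqref{integraleqt}, and the {\it a priori} bounds \eqref{apriorimild}--\eqref{limitott0}; at no point does the value of $n$ intervene. So the essential task is simply to re-examine those arguments in our slightly more general situation. For the reader's convenience I would then recall the mechanisms, as follows.

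For the positivity I would use that on any $[t_0,t]\subset\,]0,T[$ the bound \eqref{apriorimild} makes $u=\rm{BS}[\ot]$ a smooth, divergence-free drift, bounded together with all its derivatives, and that $\eta\eqdefa\ot/r$ then solves on $]t_0,t[\times\Omega$ the advection--diffusion equation $\pt\eta+(\ur\pa_r+\uz\pa_z)\eta=\bigl(\pa_r^2+\pa_z^2+\tfrac3r\pa_r\bigr)\eta$, which carries \emph{no zero-order term} and for which the axis $\{r=0\}$ is a reflecting boundary. The argument is then by duality: one solves the corresponding backward adjoint problem with a fixed nonnegative terminal datum $\psi$, which stays nonnegative by the parabolic maximum principle, observes that the resulting (suitably weighted) pairing of $\psi$ against $\eta$ is independent of the time variable, and lets $t_0\to0$; the trace hypothesis $\ot(t_0)\,drdz\rightharpoonup\mu\ge0$ then forces the pairing, hence $\int_\Omega\psi\,\eta(t)\,drdz$ for every nonnegative $\psi$, to be nonnegative, whence $\ot(t,\cdot)\ge0$ by continuity.

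Once positivity is known, I would integrate \eqref{NSom} over $\Omega$: using $\dive u=0$ to cancel the transport term against $-\ur\ot/r$, and writing $\tfrac1r\pa_r\ot-\tfrac{\ot}{r^2}=\pa_r(\ot/r)$, one obtains $\frac{d}{dt}\|\ot(t)\|_{L^1(\Omega)}=-2\int_{\R}\pa_r\ot(t,0,z)\,dz\le0$, since a nonnegative function vanishing on $\{r=0\}$ has nonnegative normal derivative there. Hence $t\mapsto\|\ot(t)\|_{L^1(\Omega)}$ is non-increasing and, being bounded by \eqref{thmcondi}, has a finite limit $M$ as $t\to0$, with $\|\ot(t)\|_{L^1(\Omega)}\le M$ for every $t$. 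To identify $M=\|\mu\|_{\rm{tv}}=\sum_i\al_i$, the bound $M\ge\|\mu\|_{\rm{tv}}$ follows by testing $\ot(t)\,drdz\rightharpoonup\mu$ against a cutoff $\phi\in C_0(\Omega)$ with $0\le\phi\le1$ and $\phi\equiv1$ near $\{x_i\}_{i=1}^n$ (so that $\int\phi\,d\mu=\sum_i\al_i$ while $\int_\Omega\phi\,\ot(t)\,drdz\le\|\ot(t)\|_{L^1(\Omega)}$), and the matching upper bound is the delicate point, amounting to ruling out any loss of mass at the axis or at infinity as $t\to0$; this is obtained in \cite{GS16} from the short-time self-similar analysis near each $x_i$. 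Finally, for \eqref{convergence} I would take $\phi\in C_b(\Omega)$, pick $\chi\in C_0(\Omega)$ with $0\le\chi\le1$ and $\chi(x_i)=1$ for every $i$, note that $\int(1-\chi)\,d\mu=0$, and use \eqref{boundmild} together with $\ot(t)\ge0$ to get $\int_\Omega(1-\chi)\,\ot(t)\,drdz=\|\ot(t)\|_{L^1(\Omega)}-\int_\Omega\chi\,\ot(t)\,drdz\to\|\mu\|_{\rm{tv}}-\int\chi\,d\mu=0$; then $\bigl|\int_\Omega\phi(1-\chi)\,\ot(t)\,drdz\bigr|\le\|\phi\|_{L^\infty(\Omega)}\int_\Omega(1-\chi)\,\ot(t)\,drdz\to0$, while $\phi\chi\in C_0(\Omega)$ gives $\int_\Omega\phi\chi\,\ot(t)\,drdz\to\int\phi\,d\mu$ by \eqref{thmcondi}, and adding the two proves \eqref{convergence}.

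The step I expect to be the real obstacle is not this bookkeeping but the two facts genuinely proved in \cite{GS16}: extracting positivity from the weak trace alone (the duality step, where the degeneracy of the diffusion operator at the axis and the passage $t_0\to0$ must be handled carefully), and the absence of instantaneous loss of mass, which is exactly what pins down $M=\|\mu\|_{\rm{tv}}$. In the present context these are not re-proved; what remains is only to verify that the arguments of \cite{GS16} use neither the value of $n$ nor any property of $\mu$ beyond its positivity and finiteness, which is indeed the case.
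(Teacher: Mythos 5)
Your proposal matches the paper exactly: the paper's entire ``proof'' of this proposition is the one-line observation that it follows by combining Corollary 2.9, Corollary 2.10 and Remark 2.11 of \cite{GS16}, whose arguments use only the positivity and finiteness of $\mu$ and not the fact that it is a single Dirac mass. Your additional sketch of the underlying mechanisms (duality for positivity, the $L^1$ monotonicity identity, and the tightness argument for \eqref{convergence}) is accurate and consistent with how those results are proved in \cite{GS16}.
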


Noting that although the initial measure $\mu$ is no longer a single Dirac mass
as considered in \cite{GS16}, it is still supported in
$\bigl[\min\limits_{1\leq i\leq n} r_i,\max\limits_{1\leq i\leq n} r_i\bigr]\times\R$.
Thus the estimates of Proposition $3.1,~3.3$ and then Lemma $3.8$ in \cite{GS16} still hold for the case here.
Precisely, we have
\begin{equation}\label{boundur}
\int_0^T\|\ur(t)/r\|_{L^\infty(\Omega)}\, dt\leq C_0.
\end{equation}

Next, let us state a particular case of Aronson's pioneering work \cite{Aronson}
on the fundamental solution of parabolic equations,
which will be a key ingredient in our decomposition.
\begin{prop}[Proposition $3.9$ of \cite{GS16}]\label{prop3.9ofGS16}
{\sl Assume that $U,~V:(0,T)\times\R^3\rightarrow\R^3$ are continuous functions such that
$\dive U(t,\cdot)=0,$ for all $t\in(0,T)$ and
$$\sup_{0<t<T}t^{\frac12}\|U(t,\cdot)\|_{L^\infty(\R^3)}=K_1<\infty,
\quad \int_0^T\|V(t,\cdot)\|_{L^\infty(\R^3)}\, dt=K_2<\infty.$$
Then the regular solutions of the following type advection-diffusion equation
\begin{equation}\label{a-deqt}
\pa_t f+U\cdot\nabla f-V f=\Delta f,\quad x\in\R^3,\quad t\in(0,T),
\end{equation}
can be represented in the following way:
$$f(t,x)=\int_{\R^3}\Phi_{U,V}(t,x;s,y)f(s,y)\, dy,\quad x\in\R^3,\ 0<s<t<T,$$
where $\Phi_{U,V}$ is the (uniquely defined) fundamental solution,
which is H\"older continuous in space and time,
and satisfies, for all $x,y\in\R^3$ and $0<s<t<T$, that
\begin{equation}\label{boundPhi}
0<\Phi_{U,V}(t,x;s,y)\leq \frac{C}{(t-s)^{\frac 32}}\exp
\Bigl(-\frac{|x-y|^2}{4(t-s)}+K_1\frac{|x-y|}{\sqrt{t-s}}+K_2\Bigr).
\end{equation}
}\end{prop}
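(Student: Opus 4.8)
The plan is to read this as a sharpened instance of the Nash--Aronson--Davies theory of Gaussian heat-kernel bounds, the two features demanding care being the scaling-critical singularity $\|U(t,\cdot)\|_{L^\infty(\R^3)}\le K_1 t^{-1/2}$ of the drift and the merely time-integrable potential $V$; a black-box application of Aronson's theorem \cite{Aronson} (which presumes bounded coefficients and yields a non-sharp Gaussian constant) does not suffice, so the weighted energy estimates must be run by hand. I would first dispatch the qualitative claims --- existence of $\Phi_{U,V}$, its positivity and H\"older continuity, and the uniqueness of the fundamental solution --- by approximation: mollify to obtain smooth, compactly supported $U_\varepsilon$ (still divergence free) and $V_\varepsilon$ with $\sup_t t^{1/2}\|U_\varepsilon(t,\cdot)\|_{L^\infty}\le K_1+o(1)$ and $\int_0^T\|V_\varepsilon(t,\cdot)\|_{L^\infty}\,dt\le K_2+o(1)$; classical parabolic theory furnishes smooth fundamental solutions $\Phi_\varepsilon>0$ (strict positivity from the strong maximum principle), the bound \eqref{boundPhi}, proved below uniformly in $\varepsilon$, allows passing to the limit, and De Giorgi--Nash--Moser local estimates (valid for $t>s$, where $U$ is bounded and $V\in L^1_tL^\infty_x$) yield the H\"older continuity and, together with \eqref{boundPhi} and the maximum principle, pin down the fundamental solution uniquely.

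Next I would remove the potential. If $f\ge0$ solves \eqref{a-deqt}, then $h_\pm(t,x):=f(t,x)\exp\bigl(\pm\int_s^t\|V(\tau,\cdot)\|_{L^\infty}\,d\tau\bigr)$ obeys $\pa_t h_\pm+U\cdot\nabla h_\pm-\Delta h_\pm=\bigl(V(t,x)\pm\|V(t,\cdot)\|_{L^\infty}\bigr)h_\pm$, which is $\le0$ for the lower choice and $\ge0$ for the upper one; comparing with the fundamental solution $\Phi_U$ of the potential-free equation ($V\equiv0$) carrying the same Cauchy datum at time $s$ gives
\[
 e^{-K_2}\,\Phi_U(t,x;s,y)\le \Phi_{U,V}(t,x;s,y)\le e^{K_2}\,\Phi_U(t,x;s,y).
\]
So it is enough to show $0<\Phi_U(t,x;s,y)\le C(t-s)^{-3/2}\exp\bigl(-\tfrac{|x-y|^2}{4(t-s)}+K_1\tfrac{|x-y|}{\sqrt{t-s}}\bigr)$; the strict positivity of $\Phi_U$ is the strong maximum principle (or the parabolic Harnack inequality) for $\pa_t f=\Delta f-\dive(Uf)$, whose coefficients are locally bounded for $t>s$.

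For the upper bound I would use Davies' exponential-weight method on $\pa_t f=\Delta f-\dive(Uf)$. Since $\dive U=0$, the drift term drops out of the energy identity, so all $L^p$ norms are non-increasing, and the Nash inequality on $\R^3$ gives the on-diagonal bound $\Phi_U(t,x;s,y)\le C(t-s)^{-3/2}$. To produce the Gaussian tail, fix a unit vector $e$ and $\lambda>0$, put $\phi(x)=\lambda\,e\cdot x$, and conjugate: $g:=e^{-\phi}f$ solves $\pa_t g=\Delta g+(2\lambda e-U)\cdot\nabla g+(\lambda^2-\lambda\,e\cdot U)\,g$. The new drift $2\lambda e-U$ is still divergence free, and the zeroth-order coefficient is $\le\lambda^2+\lambda\|U(t,\cdot)\|_{L^\infty}\le\lambda^2+\lambda K_1 t^{-1/2}$; factoring out $\exp\bigl(\int_s^t(\lambda^2+\lambda K_1\tau^{-1/2})\,d\tau\bigr)$ reduces $g$ to a non-negative subsolution of a divergence-free-drift heat equation with nonpositive zeroth-order term, whence the Nash ultracontractive bound applies with an absolute constant and
\[
 e^{-\phi(x)}\Phi_U(t,x;s,y)\,e^{\phi(y)}\le \frac{C}{(t-s)^{3/2}}\exp\Bigl(\int_s^t\bigl(\lambda^2+\lambda K_1\tau^{-1/2}\bigr)\,d\tau\Bigr)\le \frac{C}{(t-s)^{3/2}}\,e^{\lambda^2(t-s)+2\lambda K_1\sqrt{t-s}},
\]
using $\int_s^t\tau^{-1/2}\,d\tau\le 2\sqrt{t-s}$. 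Choosing $e=-(x-y)/|x-y|$ so that $e^{\phi(x)-\phi(y)}=e^{-\lambda|x-y|}$, and minimizing $-\lambda|x-y|+\lambda^2(t-s)+2\lambda K_1\sqrt{t-s}$ over $\lambda\ge0$ (the minimizer being $\lambda=\tfrac{|x-y|-2K_1\sqrt{t-s}}{2(t-s)}$ when this is positive) yields the exponent $-\tfrac{|x-y|^2}{4(t-s)}+K_1\tfrac{|x-y|}{\sqrt{t-s}}-K_1^2$; discarding $-K_1^2\le0$ gives the desired bound for $\Phi_U$ when $|x-y|\ge2K_1\sqrt{t-s}$, while in the complementary range the proposed right-hand side already dominates $C(t-s)^{-3/2}$ because the exponent is then nonnegative. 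Combining with the potential reduction and absorbing $e^{K_2}$ into the exponent gives \eqref{boundPhi}.

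I expect the main obstacle to be the rigour of the exponential-weight step rather than its formal content: $\phi$ is unbounded, so one must first replace it by a Lipschitz truncation $\phi_R$ with $|\nabla\phi_R|\le\lambda$ --- which reintroduces a small $\Delta\phi_R$-error into the zeroth-order term --- carry the estimate through, and only afterwards send $R\to\infty$, all the while checking that the Nash--Moser iteration delivers exactly the power $(t-s)^{-3/2}$ and the sharp constant $\tfrac14$ with the genuinely singular-in-time coefficient $\lambda K_1\tau^{-1/2}$ integrated in $\tau$; the fact that the stray factor $2$ in $\int_s^t\tau^{-1/2}\,d\tau\le2\sqrt{t-s}$ is exactly cancelled by the subsequent optimization over $\lambda$ is the delicate bookkeeping. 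The endpoint integrability $V\in L^1_tL^\infty_x$ for the H\"older-continuity claim is a secondary nuisance, circumvented most comfortably by the reduction to $\Phi_U$ carried out above.
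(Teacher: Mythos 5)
Your proposal is correct in outline, but note that the paper itself offers no proof of this statement: it is quoted verbatim as Proposition 3.9 of \cite{GS16}, which in turn rests on Aronson-type estimates for the fundamental solution, sharpened for divergence-free drifts with the critical $t^{-1/2}$ singularity (the form of the exponent, with the exact constant $\tfrac14$ and the correction $K_1|x-y|/\sqrt{t-s}$, is of Carlen--Loss/Osada type rather than raw Aronson). What you have written is essentially a reconstruction of that underlying machinery: the sandwich $e^{-K_2}\Phi_U\le\Phi_{U,V}\le e^{K_2}\Phi_U$ obtained by conjugating with $\exp\bigl(\pm\int_s^t\|V(\tau)\|_{L^\infty}d\tau\bigr)$ is the standard and correct way to dispose of the potential; the Nash on-diagonal bound exploits $\dive U=0$ exactly as you say; and the Davies conjugation with $\phi=\lambda e\cdot x$, the bound $\int_s^t\tau^{-1/2}d\tau\le 2\sqrt{t-s}$, and the optimization $\lambda=(|x-y|-2K_1\sqrt{t-s})/(2(t-s))$ reproduce the stated exponent, with the case $|x-y|\le 2K_1\sqrt{t-s}$ handled correctly by $\lambda=0$. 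The technical caveats you flag (Lipschitz truncation of the unbounded weight, uniformity of the Nash constant) are the right ones and are surmountable by standard arguments. The one point where your sketch is slightly too quick is the H\"older continuity of $\Phi_{U,V}$ itself: the reduction to $\Phi_U$ gives pointwise bounds but not regularity, and $V\in L^1_tL^\infty_x$ sits at the endpoint of the De Giorgi--Nash--Moser classes; this is harmless here because continuity is only asserted for $0<s<t<T$, where one can work on compact time intervals on which $U$ is bounded and absorb $V$ locally (or invoke a Duhamel expansion around $\Phi_U$), but it deserves a sentence. In short: your route is self-contained where the paper is purely citational, at the cost of redoing the Nash--Davies analysis that \cite{GS16} packages into its Proposition 3.9.
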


It is easy to derive the evolution equation for $\omega=\ot(t,r,z)e_\th$ from \eqref{NSu} that
\begin{equation}
\pa_t \om+u\cdot\nabla \om-r^{-1}\ur\om=\Delta \om,\quad x\in\R^3,\quad t\in(0,T),
\end{equation}
which is exactly of the form \eqref{a-deqt} with $U=u,~V=r^{-1}\ur$. In view of \eqref{apriorimild} and \eqref{boundur},
the conditions of Proposition \ref{prop3.9ofGS16} are satisfied. Thus this $\om$ can be represented as
$$\om(t,x)=\int_{\R^3}\Phi(t,x;s,y)\om(s,y)\, dy,\quad x\in\R^3,\ 0<s<t<T.$$
From which, we can deduce that $\ot$ satisfies
\begin{equation}\label{otdecomposition1}
\ot(t,r,z)=\int_{\Omega}\widetilde{\Phi}(t,r,z;s,r',z')\ot(s,r',z')\, dr'dz',\quad
 0<s<t<T,
\end{equation}
where
$$\widetilde{\Phi}(t,r,z;s,r',z')=\int_{-\pi}^{\pi}
\Phi\bigl(t,(r,0,z);s,(r'\cos\th,r'\sin\th,z')\bigr)\cdot r'\cos\th\,d\th.$$
Using the Gaussian upper bound \eqref{boundPhi} of the fundamental solution $\Phi$, we get
\begin{lem}[Lemma $3.10$ of \cite{GS16}]
{\sl For any $\eta\in]0,1[$ and $0<s<t<T$, there exists some positive constant $C_{\eta,\al}$
depending only on the choice of $\eta$ and $(\al_i)_{i=1}^n$, such that
\begin{equation}\label{boundtildePhi}
0<\widetilde{\Phi}(t,r,z;s,r',z')\leq\frac{C_{\eta,\al}}{t-s}\Bigl|\frac{r'}{r}\Bigr|^{\frac12}
\widetilde{H}\Bigl(\frac{t-s}{(1-\eta)rr'}\Bigr)e^{-\frac{1-\eta}{4(t-s)}
\bigl((r-r')^2+(z-z')^2\bigr)},
\end{equation}
where $\widetilde{H}:(0,\infty)\rightarrow\R$ is decreasing with
$\widetilde{H}(\tau)\rightarrow 1$ as $\tau\rightarrow 0$ and
$\widetilde{H}(\tau)\thicksim 1/\sqrt{\pi\tau}$ as $\tau\rightarrow \infty$.
}\end{lem}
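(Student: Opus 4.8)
The plan is to substitute Aronson's pointwise bound \eqref{boundPhi} into the defining integral
$$\widetilde{\Phi}(t,r,z;s,r',z')=\int_{-\pi}^{\pi}\Phi\bigl(t,(r,0,z);s,(r'\cos\th,r'\sin\th,z')\bigr)\,r'\cos\th\,d\th,$$
carry out the angular integration, and reorganise the result into the scale‑invariant form \eqref{boundtildePhi}. Two things must be shown: the Gaussian upper bound, which is a direct (if bookkeeping‑heavy) computation, and the strict positivity, which needs a short structural remark because the weight $r'\cos\th$ is not of one sign.

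For the upper bound, write $x=(r,0,z)$, $y=(r'\cos\th,r'\sin\th,z')$, and record the identity $|x-y|^2=(r-r')^2+(z-z')^2+2rr'(1-\cos\th)$. The only non‑automatic point in using \eqref{boundPhi} is the advective term $K_1|x-y|/\sqrt{t-s}$: for every $\eta\in]0,1[$ and every $a\geq0$, completing the square gives $-\tfrac{a^2}{4(t-s)}+K_1\tfrac{a}{\sqrt{t-s}}\leq-(1-\eta)\tfrac{a^2}{4(t-s)}+\tfrac{K_1^2}{\eta}$. Applying this with $a=|x-y|$, recalling that $K_1\lesssim C_0$ and $K_2\leq C_0$ by \eqref{apriorimild} and \eqref{boundur}, and splitting off the part of the Gaussian carrying $(r-r')^2+(z-z')^2$, one gets
$$\Phi\bigl(t,(r,0,z);s,(r'\cos\th,r'\sin\th,z')\bigr)\leq\frac{C_{\eta,\al}}{(t-s)^{3/2}}\,e^{-\frac{(1-\eta)\bigl((r-r')^2+(z-z')^2\bigr)}{4(t-s)}}\,e^{-\frac{(1-\eta)rr'(1-\cos\th)}{2(t-s)}}.$$
Bounding $r'\cos\th$ by $r'|\cos\th|$ and integrating over $\th\in]-\pi,\pi[$ bounds $\widetilde{\Phi}(t,r,z;s,r',z')$ by $\dfrac{C_{\eta,\al}\,r'}{(t-s)^{3/2}}\,e^{-\frac{(1-\eta)((r-r')^2+(z-z')^2)}{4(t-s)}}\,\mathcal{I}\bigl(\tfrac{(1-\eta)rr'}{2(t-s)}\bigr)$, where $\mathcal{I}(b)\eqdefa\int_{-\pi}^{\pi}|\cos\th|\,e^{-b(1-\cos\th)}\,d\th$.

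To match \eqref{boundtildePhi}, set $\tau\eqdefa\frac{t-s}{(1-\eta)rr'}$, so the argument of $\mathcal{I}$ is $\tfrac1{2\tau}$, and use $\frac{r'}{(t-s)^{3/2}}=\frac1{t-s}\bigl(\frac{r'}{r}\bigr)^{1/2}\bigl(\frac{rr'}{t-s}\bigr)^{1/2}=\bigl((1-\eta)\tau\bigr)^{-1/2}\,\frac1{t-s}\bigl(\frac{r'}{r}\bigr)^{1/2}$. Declaring $\widetilde{H}(\tau)$ to be a fixed positive multiple of $\tau^{-1/2}\mathcal{I}\bigl(\tfrac1{2\tau}\bigr)$, normalised so that $\widetilde{H}(0^+)=1$, and absorbing $(1-\eta)^{-1/2}$ into $C_{\eta,\al}$, reproduces exactly \eqref{boundtildePhi}. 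The claimed behaviour of $\widetilde{H}$ is elementary: $\mathcal{I}$ is strictly decreasing; $\mathcal{I}(b)\to\int_{-\pi}^{\pi}|\cos\th|\,d\th$ as $b\to0$, so $\widetilde{H}(\tau)\sim\mathrm{const}\cdot\tau^{-1/2}$ as $\tau\to\infty$; and Laplace's method near $\th=0$ gives $\mathcal{I}(b)\sim\sqrt{2\pi/b}$ as $b\to\infty$, so $\widetilde{H}(\tau)$ tends to a constant (to $1$ after the normalisation) as $\tau\to0$. Monotonicity of $\widetilde{H}$ itself is checked by differentiation, or — since only an upper bound is needed — one simply replaces it throughout by a convenient smooth decreasing majorant with the same two asymptotics.

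For strict positivity, note that, exactly as in the derivation of \eqref{otdecomposition1}, $\widetilde{\Phi}(t,\cdot;s,\cdot)$ is the fundamental solution on $\Omega$ of the linear equation obtained by freezing the drift in \eqref{NSom} at the actual $u=\mathrm{BS}[\ot]$, i.e. $\pa_t g+(\ur\pa_r+\uz\pa_z)g-r^{-1}\ur g=(\pa_r^2+\pa_z^2+r^{-1}\pa_r-r^{-2})g$ with $g|_{r=0}=0$; this equation is genuinely parabolic on $\{r>0\}$, its zeroth‑order terms $-r^{-2}$ and $-r^{-1}\ur$ do not spoil positivity‑preservation of the flow, and its coefficients satisfy Aronson's hypotheses by \eqref{apriorimild}, \eqref{boundur}, whence $\widetilde{\Phi}>0$ by the strong maximum principle (equivalently, via the reflection $\th\mapsto\pi-\th$ and the maximum principle across $\{y_1=0\}$ applied to $\Phi$ itself). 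The computations above are routine and contain no single hard estimate; the real content is organisational: the $\eta$‑splitting that trades a sliver of Gaussian decay for the absorption of $K_1|x-y|/\sqrt{t-s}$, and the change of variables $\tau=(t-s)/((1-\eta)rr')$ that exhibits the angular average precisely as the universal profile $\widetilde{H}$ at this scale‑invariant ratio — so that $\widetilde{H}$ interpolates between the quasi‑two‑dimensional regime $rr'\gg t-s$ (where $\widetilde{H}\approx1$) and the fully three‑dimensional regime near the axis $rr'\ll t-s$ (where $\widetilde{H}(\tau)\sim(\pi\tau)^{-1/2}$, which restores the $(t-s)^{-3/2}$ heat‑kernel decay).
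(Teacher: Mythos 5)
Your proposal is correct and follows exactly the route the paper intends (it gives no proof of its own, deferring to Lemma 3.10 of \cite{GS16}): substitute Aronson's bound \eqref{boundPhi} into the angular integral, absorb the drift term $K_1|x-y|/\sqrt{t-s}$ by sacrificing a factor $(1-\eta)$ in the Gaussian, use $|x-y|^2=(r-r')^2+(z-z')^2+2rr'(1-\cos\th)$, and identify the angular average as the profile $\widetilde{H}$ evaluated at $\tau=(t-s)/((1-\eta)rr')$, with positivity coming from the maximum principle rather than from the sign-changing weight $r'\cos\th$. The only cosmetic discrepancy is that your normalisation of $\widetilde{H}$ cannot simultaneously give $\widetilde{H}(0^+)=1$ and the exact constant $1/\sqrt{\pi\tau}$ at infinity, but since $\widetilde{H}$ is only ever used as an upper bound up to the constant $C_{\eta,\al}$, this is immaterial.
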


Let us write \eqref{otdecomposition1} in the following way
\begin{align*}
\ot(t,r,z)=\int_{\Omega}&\widetilde{\Phi}(t,r,z;0,r',z')\ot(s,r',z')\, dr'dz'\\
&+\int_{\Omega}\bigl(\widetilde{\Phi}(t,r,z;s,r',z')-\widetilde{\Phi}(t,r,z;0,r',z')\bigr)\ot(s,r',z')\, dr'dz'.
\end{align*}
In view of the H\"older continuity and Gaussian upper bound \eqref{boundPhi}
of the fundamental solution $\Phi$, we deduce that $\widetilde{\Phi}$ is continuous whenever $0<s<t<T$.
Combining this with the facts that $\widetilde{\Phi}$ is bounded as shown
in \eqref{boundtildePhi}, and  $\|\ot(t)\|_{L^1(\Omega)}
\leq\|\mu\|_{\rm{tv}}$ as shown in \eqref{boundmild}, we know the second integral in the right-hand side converges to $0$
as $s$ tends to $0$. On the other hand, since $\widetilde{\Phi}$ is continuous and bounded, we can use \eqref{convergence} to derive
the limit of the first integral as $s$ tends to $0$, and we finally obtain the following useful representation:
\begin{equation*}
\ot(t,r,z)=\int_{\Omega}\widetilde{\Phi}(t,r,z;0,r',z')\,d\mu.
\end{equation*}
Recalling $\mu=\sum\limits_{i=1}^n\alpha_i\delta_{x_i}$, we can obtain the decomposition for $\ot$ as follows:
\begin{equation}\label{decomototi}
\ot(t,r,z)=\sum\limits_{i=1}^n\ot_i(t,r,z),\quad \mbox{where}\quad
\ot_i(t,r,z)=\al_i\widetilde{\Phi}(t,r,z;0,r_i,z_i),
\end{equation}
and the corresponding decomposition for $u=BS[\ot]$:
\begin{equation}\label{decomuui}
u(t,r,z)=\sum\limits_{i=1}^n u_i(t,r,z),\quad \mbox{where}\quad
u_i=BS[\ot_i].
\end{equation}

It is easy to see that $\ot_i
\in C\bigl(]0,T[, L^1(\Omega) \cap L^{\infty}(\Omega)\bigr)$ is a mild solution of
\begin{equation}\label{eqtoti}
\left\{
\begin{split}
& \pa_t\ot_i+u\cdot\nabla\ot_i-\bigl(\pa_r^2+\pa_z^2+\frac 1r \pa_r-\frac{1}{r^2}\bigr)\ot_i=0,   \quad(t,r,z)\in]0,T[\times\Omega,\\
& \ot_i\rightharpoonup \alpha_i\delta_{x_i}\quad\mbox{as}\quad t\rightarrow 0.
\end{split}
\right.
\end{equation}
Moreover, we have the following estimates for $\ot_i$.
\begin{prop}
{\sl $i)$ For any $\eta\in]0,1[,~(r,z)\in\Omega$ and $0<t<T$, we have
\begin{equation}\label{boundoti}
0<\ot_i(t,r,z)\leq\frac{C_{\eta,\al}}{t}
e^{-\frac{1-\eta}{4t}\bigl((r-r_i)^2+(z-z_i)^2\bigr)}.
\end{equation}
\begin{equation}\label{otiL1limit}
\|\ot_i(t)\|_{L^1(\Omega)}
\leq\|\mu\|_{\rm{tv}}\quad\mbox{and}\quad
\lim\limits_{t\rightarrow 0}\|\ot_i(t)\|_{L^1(\Omega)}=\al_i.
\end{equation}
$ii)$ There exists some positive time $t_1<T$, such that for any $0<t<t_1$, there holds
\begin{equation}\label{boundnablaoti}
t^{\frac32}\|\nabla\ot_i(t)\|_{L^\infty(\Omega)}\leq C_0.
\end{equation}
}\end{prop}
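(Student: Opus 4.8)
The plan is to prove \eqref{boundoti}, \eqref{otiL1limit} and \eqref{boundnablaoti} in turn, the last being by far the substantial one. For \eqref{boundoti}, positivity of $\ot_i$ is read off from $\ot_i=\al_i\widetilde\Phi(\,\cdot\,;0,x_i)$ in \eqref{decomototi}, from $\al_i>0$ and from the strict positivity in \eqref{boundtildePhi} — for the endpoint $s=0$ one can use the reproducing formula of the third paragraph with $s=t/2$, noting that $\ot_i(t/2)\ge0$ cannot be identically zero (else $\ot_i\equiv0$, contradicting its trace $\al_i\delta_{x_i}$). For the upper bound I would start from \eqref{boundtildePhi} at $(r',z')=x_i$, $s=0$; since $\eta$ is free, the only thing to check is that the prefactor $\al_i|r_i/r|^{1/2}\widetilde H\bigl(\tfrac{t}{(1-\eta)rr_i}\bigr)$ is harmless. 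For $r\ge r_i/2$ it is $\lesssim\al_i$ since $\widetilde H\le1$; for $0<r<r_i/2$ it is $\lesssim\al_i r_i\,t^{-1/2}$ by $\widetilde H(\tau)\lesssim\tau^{-1/2}$, and since there $(r-r_i)^2\ge r_i^2/4$, this growth is absorbed into a small fixed fraction of the Gaussian, using that $t\mapsto t^{-1/2}e^{-\kappa/t}$ is bounded; the $\varepsilon$-loss in the exponent is recovered by running \eqref{boundtildePhi} with a parameter slightly below $\eta$. (So the constant in \eqref{boundoti} depends on the $x_i$ as well.)

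For \eqref{otiL1limit}: from $\ot_i\ge0$ and $\ot=\sum_j\ot_j$ one gets $0\le\ot_i\le\ot$, so $\|\ot_i(t)\|_{L^1(\Omega)}\le\|\ot(t)\|_{L^1(\Omega)}\le\|\mu\|_{\rm tv}$ by \eqref{boundmild}; moreover $\sum_j\|\ot_j(t)\|_{L^1(\Omega)}=\|\ot(t)\|_{L^1(\Omega)}\to\sum_j\al_j$ as $t\to0$, again by \eqref{boundmild}. Testing the weak convergence $\ot_i(t)\,drdz\rightharpoonup\al_i\delta_{x_i}$ of \eqref{eqtoti} against one fixed $\phi\in C_c(\Omega)$ with $0\le\phi\le1$ and $\phi(x_i)=1$ gives $\al_i\le\liminf_{t\to0}\|\ot_i(t)\|_{L^1(\Omega)}$. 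Together with the convergence of the total to $\sum_j\al_j$, a routine subsequence argument — if some $\limsup_{t\to0}\|\ot_{i_0}(t)\|_{L^1(\Omega)}$ exceeded $\al_{i_0}$, the limit of the total along a suitable sequence would exceed $\sum_j\al_j$ — forces $\lim_{t\to0}\|\ot_i(t)\|_{L^1(\Omega)}=\al_i$.

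For \eqref{boundnablaoti}, the crux, the idea is to differentiate a reproducing formula at the intermediate time $t/2$. First, exactly as in the passage leading to \eqref{otdecomposition1}, applied now to the $3$D field $\om_i=\ot_i\,e_\theta$ — which solves \eqref{a-deqt} with $U=u$, $V=r^{-1}\ur$, being a superposition of the fundamental solutions $\Phi(\,\cdot\,;0,y)$ — one has, for $0<s<t<T$,
\[
  \ot_i(t,r,z)=\int_\Omega\widetilde\Phi(t,r,z;s,r',z')\,\ot_i(s,r',z')\,dr'dz'.
\]
Next I would establish a Gaussian bound for $\nabla_{(r,z)}\widetilde\Phi$ on $[t/2,t]$. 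On that interval, by \eqref{apriorimild} (and $\ur(\sigma,r,z)/r=\int_0^1\pa_r\ur(\sigma,\rho r,z)\,d\rho$), the coefficients satisfy $\|U(\sigma)\|_{L^\infty}+\sqrt\sigma\,\|\nabla U(\sigma)\|_{L^\infty}\lesssim\sigma^{-1/2}$ and $\|V(\sigma)\|_{L^\infty}+\sqrt\sigma\,\|\nabla V(\sigma)\|_{L^\infty}\lesssim\sigma^{-1}$ on $\R^3$; after the parabolic rescaling $x\mapsto x/\sqrt t$, $\sigma\mapsto\sigma/t$ these become bounded in $C^1$ on $[1/2,1]\times\R^3$ uniformly in $t$, so the classical Friedman--Aronson estimates for fundamental solutions give, uniformly in $t\in(0,t_1)$,
\[
  0<\Phi(t,x;t/2,y)\lesssim t^{-3/2}e^{-c|x-y|^2/t},\qquad
  |\nabla_x\Phi(t,x;t/2,y)|\lesssim t^{-2}e^{-c|x-y|^2/t}.
\]
Integrating over the angular variable as in the definition of $\widetilde\Phi$, using $1-\cos\theta\ge\tfrac{2}{\pi^2}\theta^2$ and $\int_{-\pi}^{\pi}e^{-b\theta^2}\,d\theta\lesssim\min(1,b^{-1/2})$, this turns into
\[
  |\nabla_{(r,z)}\widetilde\Phi(t,r,z;t/2,r',z')|
  \lesssim r'\,t^{-2}\min\bigl(1,\sqrt{t/(rr')}\bigr)\,e^{-c((r-r')^2+(z-z')^2)/t}.
\]
Finally, I would take $s=t/2$ in the reproducing formula, differentiate in $(r,z)$, and substitute this bound together with the already-proved \eqref{boundoti} for $\ot_i(t/2,\cdot)$ (which is $\lesssim t^{-1}$ times a Gaussian centred at $x_i$). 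In the Gaussian integral in $(r',z')$: the two Gaussians contract to $\lesssim t\,e^{-\tilde c|(r,z)-x_i|^2/t}$, the weight $r'$ contributes $\lesssim r_i$ on the region that is not exponentially negligible, and — the point that fixes the exponent — on that region $r,r'\gtrsim r_i$, so $\min(1,\sqrt{t/(rr')})\lesssim\sqrt t/r_i$, which brings the power down from $t^{-2}$ to $t^{-3/2}$; on the complementary region $r<r_i/2$ one has $|(r,z)-x_i|\ge r_i/2$, so a fixed fraction of the Gaussian absorbs any polynomial loss via $t^{-a}e^{-\kappa/t}\lesssim t^{-3/2}$ for $t<t_1$. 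This yields $\|\nabla\ot_i(t)\|_{L^\infty(\Omega)}\le C_0\,t^{-3/2}$ (in fact with Gaussian decay away from $x_i$, which is not needed) for $0<t<t_1$, once $t_1$ is fixed small enough — depending on the $x_i$ — that $\sqrt{t_1}$ is dominated by $\min_i r_i$.

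The main obstacle is \eqref{boundnablaoti}. A bare Duhamel estimate against the semigroup $\bigl(S(t)\bigr)$ does not close, since $\nabla S(\tau)\dive_*$ scales like $\tau^{-1}$, which is not time-integrable near the upper endpoint; one is forced through the fundamental solution $\Phi$, which requires verifying the ($t$-uniform, after rescaling) regularity of the coefficients $U=u$, $V=r^{-1}\ur$ from \eqref{apriorimild}, and then — just as in the Oseen-vortex case $n=1$ of \cite{GS16} — retaining the anisotropic factor $\min(1,\sqrt{t/(rr')})$ of \eqref{boundtildePhi} rather than crudely bounding it by $1$, which is precisely what produces the sharp power $t^{-3/2}$ rather than $t^{-2}$.
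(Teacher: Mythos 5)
Your treatment of \eqref{boundoti} and \eqref{otiL1limit} is correct and essentially identical to the paper's: the pointwise bound comes from \eqref{boundtildePhi} at $s=0$, $(r',z')=x_i$, with the case split $2r\le r_i$ versus $2r>r_i$ and the absorption of the $t^{-1/2}$ prefactor into a slightly weakened Gaussian; the $L^1$ statement comes from positivity, $\sum_j\|\ot_j\|_{L^1}=\|\ot\|_{L^1}\to\sum_j\al_j$, and the lower bound $\liminf\|\ot_i\|_{L^1}\ge\al_i$ from the weak convergence to $\al_i\delta_{x_i}$ (your liminf/limsup bookkeeping is in fact slightly more careful than the paper's).

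For \eqref{boundnablaoti} you take a genuinely different route, and your stated reason for doing so is not accurate. The paper \emph{does} close a Duhamel estimate with the semigroup $S(t)$: it writes $\ot_i(t)=S(t/2)\ot_i(t/2)-\int_{t/2}^tS(t-s)\dive_*(u\ot_i)\,ds$, but instead of keeping both derivatives on the kernel (which would indeed produce the non-integrable $(t-s)^{-1}$ you object to), it estimates $\dive_*(u\ot_i)$ directly in $L^\infty$ by $\|\nabla u\|_{L^\infty}\|\ot_i\|_{L^\infty}+\|u\|_{L^\infty}\|\nabla\ot_i\|_{L^\infty}$, pays only $(t-s)^{-1/2}$ from $\nabla S(t-s)$, and then absorbs the resulting $\sup_s s^{3/2}\|\nabla\ot_i(s)\|_{L^\infty}$ term into the left-hand side using that $\sqrt s\,\|u(s)\|_{L^\infty}\to0$ as $s\to0$ (a consequence of the Biot--Savart estimate and \eqref{limitott0}). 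Your alternative — a gradient Gaussian bound on the fundamental solution $\Phi(t,\cdot;t/2,\cdot)$ obtained from the parabolically rescaled coefficients, followed by the angular integration producing the factor $\min\bigl(1,\sqrt{t/(rr')}\bigr)$ and the contraction of the two Gaussians — does yield the correct power $t^{-3/2}$, and your accounting of the regions $r\gtrsim r_i$ and $r<r_i/2$ is sound. But be aware that it rests on an input the paper never uses: the Aronson bound \eqref{boundPhi} quoted in Proposition \ref{prop3.9ofGS16} controls $\Phi$ only, not $\nabla_x\Phi$, so you must invoke the stronger Friedman parametrix estimates for fundamental solutions with ($C^1$, or at least H\"older) coefficients, justified here by \eqref{apriorimild} and your observation that $u^r/r=\int_0^1\pa_r u^r(\rho r,z)\,d\rho$. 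This is legitimate but heavier machinery; the trade-off is that your route is self-contained in $\Phi$ and avoids the a priori unknown $\sup_s s^{3/2}\|\nabla\ot_i(s)\|_{L^\infty}$ that the paper's absorption argument must implicitly assume finite, while the paper's route needs nothing beyond the semigroup smoothing bound \eqref{boundnablaoti2}.
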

\begin{proof}
$i)$ Using \eqref{boundtildePhi}, we immediately get
\begin{equation}\label{boundoti1}
0<\ot_i(t,r,z)\leq\frac{C_{\eta,\al}}{t}\Bigl|\frac{r_i}{r}\Bigr|^{\frac12}
\widetilde{H}\Bigl(\frac{t}{(1-\eta)rr_i}\Bigr)e^{-\frac{1-\eta}{4t}
\bigl((r-r_i)^2+(z-z_i)^2\bigr)}.
\end{equation}

When $2r\leq r_i$, using the facts
$\widetilde{H}(\tau)\leq 1/\sqrt{\pi\tau}$ and $2|r_i-r|\geq r_i$ in this case gives
$$\Bigl|\frac{r_i}{r}\Bigr|^{\frac12}
\widetilde{H}\Bigl(\frac{t}{(1-\eta)rr_i}\Bigr)
\leq \frac{r_i}{\sqrt\pi}\bigl(\frac{1-\eta}{t}\bigr)^{\frac12}
\leq C_{\eta,\al}\cdot e^{\frac{\eta(1-\eta)}{4t}(r-r_i)^2}.$$
Substituting this into \eqref{boundoti1}, and noting the fact that,
when $\eta$ runs over $]0,1[$, $(1-\eta)^2$ also runs over $]0,1[$, gives exactly \eqref{boundoti} in this case.

And when $2r> r_i$, \eqref{boundoti} follows by simply bounding $\widetilde{H}$ by $1$ in \eqref{boundoti1}.

To prove \eqref{otiL1limit}, notice that $\ot_i>0$ and $\ot=\sum\limits_{i=1}^n\ot_i$, we have
\begin{equation}\label{boundot2}
\sum\limits_{i=1}^n\|\ot_i(t)\|_{L^1(\Omega)}=\|\ot(t)\|_{L^1(\Omega)}
\leq\|\mu\|_{\rm{tv}},\quad\forall t\in]0,T[,
\end{equation}
which in particular implies $\|\ot_i(t)\|_{L^1(\Omega)}\leq\|\mu\|_{\rm{tv}}$.
By taking limit $t\rightarrow0$ in \eqref{boundot2}, we obtain
$$\sum\limits_{i=1}^n\lim_{t\rightarrow0}\|\ot_i(t)\|_{L^1(\Omega)}
=\lim_{t\rightarrow0}\|\ot(t)\|_{L^1(\Omega)}
=\|\mu\|_{\rm{tv}}=\sum\limits_{i=1}^n\al_i.$$
On the other hand, the initial condition
$\ot_i\rightharpoonup \alpha_i\delta_{x_i}$ as $t\rightarrow 0$
implies
$$\lim_{t\rightarrow0}\|\ot_i(t)\|_{L^1(\Omega)}\geq\al_i.$$
Combining the above two sides, clearly there must hold
$$\lim\limits_{t\rightarrow 0}\|\ot_i(t)\|_{L^1(\Omega)}=\al_i.$$
$ii)$ For any $0<t<T$, we first write \eqref{eqtoti} in the integral form as
\begin{equation}\label{boundnablaoti1}
\ot_i(t) = S(t/2)\ot_i(t/2) - \int_{t/2}^t S(t-s)\dive_*
  \bigl(u(s)\ot_i(s)\bigr) \, ds.
\end{equation}
Then we need the following lemma, which is a particular case of
\begin{lem}
{\sl For any $1\leq p\leq q\leq \infty,$ and $f(r,z)\in L^p(\Omega)$,
there holds
\begin{equation}\label{boundnablaoti2}
\|{\nabla} S(t)f \|_{L^q(\Omega)}
\leq\frac{C}{t^{\frac 12+\frac 1p-\frac 1q}}\| f\|_{L^p(\Omega)},
\end{equation}
}\end{lem}
Using \eqref{boundnablaoti1} and \eqref{boundnablaoti2}, together with the bounds
\eqref{apriorimild} and \eqref{boundmild}, as well as the fact that $\ot_i\leq\ot$ point-wisely, we achieve
\begin{align*}
\|\nabla\ot_i(t)\|_{L^\infty(\Omega)}&\leq\frac{C}{t^{3/2}}\|\ot_i(t/2)\|_{L^1(\Omega)}
+\int_{\frac t2}^t\frac{C}{(t-s)^{1/2}}\bigl(\|\nabla u(s)\|_{L^\infty(\Omega)}\|\ot_i(s)\|_{L^\infty(\Omega)}\\
&\qquad\qquad\qquad\qquad\qquad\qquad\qquad\qquad\quad
+\|u(s)\|_{L^\infty(\Omega)}\|\nabla\ot_i(s)\|_{L^\infty(\Omega)}\bigr)\, ds\\
&\leq\frac{C_0}{t^{3/2}}+\int_{\frac t2}^t
\frac{C_0}{(t-s)^{1/2}}\Bigl(\frac{1}{s^2}+\frac{1}{\sqrt s}\cdot s^{\frac12}
\|u(s)\|_{L^\infty(\Omega)}\|\nabla\ot_i(s)\|_{L^\infty(\Omega)}\Bigr)\, ds\\
&\leq\frac{C_0}{t^{3/2}}+C_0\sup_{t/2<s<t}s^{\frac12}
\|u(s)\|_{L^\infty(\Omega)}\cdot\sup_{t/2<s<t}\|\nabla\ot_i(s)\|_{L^\infty(\Omega)}.
\end{align*}
Multiplying both sides by $t^{3/2}$, we get
$$t^{\frac32}\|\nabla\ot_i(t)\|_{L^\infty(\Omega)}\leq C_0+C_0
\sup_{t/2<s<t}s^{\frac12}\|u(s)\|_{L^\infty(\Omega)}\cdot
\sup_{t/2<s<t}s^{\frac32}\|\nabla\ot_i(s)\|_{L^\infty(\Omega)}.$$
Then taking supremum over $t$ leads to
\begin{equation}\label{boundnablaoti3}
\sup_{0<s<t}s^{\frac32}\|\nabla\ot_i(s)\|_{L^\infty(\Omega)}\leq C_0\bigl(1+
\sup_{0<s<t}s^{\frac12}\|u(s)\|_{L^\infty(\Omega)}\cdot
\sup_{0<s<t}s^{\frac32}\|\nabla\ot_i(s)\|_{L^\infty(\Omega)}\bigr).
\end{equation}
Noting that $u=BS[\ot]$, we can use Proposition $2.3$ of \cite{GS15} to obtain
$$\|u\|_{L^\infty(\Omega)}\leq C \|\ot\|_{L^1(\Omega)}^{\frac12}\|\ot\|_{L^\infty(\Omega)}^{\frac12},$$
which together with \eqref{limitott0} indicates that
\begin{equation}
\lim_{t\rightarrow 0}t^{\frac12}\|u(t)\|_{L^\infty(\Omega)}=0.
\end{equation}
Thus there exists some $t_1>0$, such that for any $s\in]0,t_1[$ and the $C_0$ in \eqref{boundnablaoti3}, there holds
$$C_0\cdot s^{\frac12}\|u(s)\|_{L^\infty(\Omega)}<\frac12,$$
which guarantees that the term $C_0\sup\limits_{0<s<t}s^{\frac12}\|u(s)\|_{L^\infty(\Omega)}\cdot
\sup\limits_{0<s<t}s^{\frac32}\|\nabla\ot_i(s)\|_{L^\infty(\Omega)}$ in \eqref{boundnablaoti3} can be absorbed by the left hand side.
This gives exactly the desired estimate \eqref{boundnablaoti}.
\end{proof}

\subsection{Self-similar variables}
In view of \eqref{boundoti}, we know that
$\ot_j$ concentrates in a self-similar way around $x_j$ for short time.
Thus it is very natural to introduce the self-similar variables:
\begin{equation}\label{defselfsimilarvariables}
R_j=\frac{r-r_j}{\sqrt t},\quad Z_j=\frac{z-z_j}{\sqrt t},\quad X_j=\frac{x-x_j}{\sqrt t}
\quad\mbox{and}\quad \e_j=\frac{\sqrt t}{r_j},\quad j=1,\cdots,n.
\end{equation}

Correspondingly, for any $j\in\{1,\cdots,n\},~t\in(0,T)$ and any $(r,z)\in\Omega $, we set
\begin{equation}\label{blowupotiui}
\ot_j(t,r,z)=\frac{\al_j}{t}f_j\Bigl(t,\frac{r-r_j}{\sqrt t},\frac{z-z_j}{\sqrt t}\Bigr),
\quad u_j(t,r,z)=\frac{\al_j}{\sqrt t}U_j\Bigl(t,\frac{r-r_j}{\sqrt t},\frac{z-z_j}{\sqrt t}\Bigr).
\end{equation}
In the new coordinates $(R_j,Z_j)$, the domain constraint $r>0$ translates into $r_j+\sqrt{t}R_j>0$,
which means that the rescaled vorticity $f_j(t,R_j,Z_j)$ is defined in the time-dependent domain
$$\Omega_{\e_j}\eqdefa\{(R_j,Z_j)\in\R^2\,|\,1+\e_j R_j>0\}.$$
Noting that $u_j=BS[\ot_j]$, thus $U_j$ can also be determined by $f_j$.
Recalling the subsection $4.2$ of \cite{GS16}, we have the following explicit representation
\begin{equation}\begin{split}\label{paraBS}
&U_j^r(X_j)=\frac{1}{2\pi}\int_{\Omega_{\e_j}}\sqrt{(1+\e_j R')(1+\e_j R_j)^{-1}}
F_1(\xi_j^2)\frac{Z_j-Z'}{|X_j-X'|^2} f_j(X')\,dX',\\
&U_j^z(X_j)=-\frac{1}{2\pi}\int_{\Omega_{\e_j}}\sqrt{(1+\e_j R')(1+\e_j R_j)^{-1}}
F_1(\xi_j^2)\frac{R_j-R'}{|X_j-X'|^2} f_j(X')\,dX'\\
&\qquad\qquad+\frac{\e_j}{4\pi}\int_{\Omega_{\e_j}}\sqrt{(1+\e_j R')(1+\e_j R_j)^{-3}}
\bigl(F_1(\xi_j^2)+F_2(\xi_j^2)\bigr) f_j(X')\,dX',
\end{split}\end{equation}
where $F_1,~F_2$ is some kernel satisfying $s^{\sigma_1} F_1(s),~s^{\sigma_2} F_2(s)$ are bounded on $]0,\infty[$
whenever $0\leq \sigma_1\leq 3/2,~0<\sigma_2\leq 3/2$, and $\xi_j^2$ is a shorthand notation for the quantity
$$\xi_j^2=\e_j^2|X_j-X'|^2(1+\e_j R_j)^{-1}(1+\e_j R')^{-1}.$$
We denote this map from $f_j$ to $U_j$ by $U_j=BS^{\e_j}[f_j]$.
We use the superscript $\e_j$ since in the new variables,
the map  depends explicitly on time through the parameter $\e_j$.

In the rest of this paper, the following notations will also be used:
\begin{equation}
R=\frac{r-r_i}{\sqrt t},\quad Z=\frac{z-z_i}{\sqrt t},\quad X=\frac{x-x_i}{\sqrt t}
\quad\mbox{and}\quad \e=\frac{\sqrt t}{r_i},
\end{equation}
here although $R,~Z,~X,~\e$ indeed depend on $i$, we omit the index $i$
for notation simplification.

After this blow-up procedure, the gaussian bound on $\omega_i$ given by \eqref{boundoti} translates into
\begin{equation}\label{boundfi}
0<f_i(t,R,Z) \leq C_{\eta,\al}e^{ - \frac{1-\eta}{4}(R^2+Z^2)},
\end{equation}
and \eqref{otiL1limit} translates into
\begin{equation}\label{fiL1limit}
\int_{\Omega_{\e}}f_i(t,R,Z)\, dRdZ\rightarrow 1,\quad\mbox{as}\quad t\rightarrow 0.
\end{equation}
We can use the estimate \eqref{boundfi} to derive the point-wise estimate for $U_i^{\e}$.
First, recalling the proof of Proposition $2.3$ in \cite{GS15}, which shows that for any $(r,z)\in\Omega$, there holds
$$|u(r,z)|\leq C\int_\Omega\frac{1}{\sqrt{(r-r')^2+(z-z')^2}}|\ot(r',z')|\,dr'dz'.$$
Then using the self-similar variables \eqref{defselfsimilarvariables},
we obtain
$$|U_i(t,R,Z)|\leq C\int_{\Omega_\e}\frac{1}{\sqrt{(R-R')^2+(Z-Z')^2}}f_i(t,R',Z')\, dR'dZ'$$
Finally substituting \eqref{boundfi} with some fixed $\eta$ into this, leads to
\begin{equation}\label{boundUi}
\bigl(1+|R|+|Z|\bigr)|U_i(t,R,Z)|\leq C_0.
\end{equation}
Using the notation \eqref{blowupotiui}, let us also do this self-similar blow-up of the whole velocity $u$
near the point $x_i\in\Omega$ and near the initial time $t=0$, and we get
\begin{equation}\label{unpackingu}
u(t,r,z)=\frac{\al_i}{\sqrt t}U_i(t,R,Z)
+\sum\limits_{j\neq i}\frac{\al_j}{\sqrt t}U_j\bigl(t,R+\frac{r_i-r_j}{\sqrt t},Z+\frac{z_i-z_j}{\sqrt t}\bigr).
\end{equation}
In view of \eqref{boundUi}, let $t\rightarrow0$ and $R,~Z$ fixed,
all $U_j\bigl(t,R+\frac{r_i-r_j}{\sqrt t},Z+\frac{z_i-z_j}{\sqrt t}\bigr)$
for $j\neq i$ vanish, and only $U_i(t,R,Z)$ remains. Thus after this blow-up procedure,
the convection term can be very close to $U_i\cdot\nabla f_i$, for a short time.
Combining with the fact that the initial measure for $\omega_i=\ot_i e_\th$ is $\al_i\delta_{x_i}$,
hence if we believe in uniqueness, it is reasonable to expect
that, for a short time, $\omega_i$ will be very close to an Oseen vortex located at $x_i$ with circulation $\al_i$.

In order to write this observation precisely, let us denote the following functions on $\R^2$:
$$w(x,y)\eqdefa e^{(|x|^2+|y|^2)/4},\quad
G(x,y)\eqdefa\frac{1}{4\pi}e^{-(|x|^2+|y|^2)/4},\quad (x,y)\in\R^2,$$
and denote by $\mathcal{X}$ the weighted space $L^2(\mathbb{R}^2, w(x,y) dxdy)$. We have:

\begin{proposition}\label{propfilimitGaussian}
{\sl For any $i\in\{1,\cdots,n\}$,
we have $\|\overline{f}_i(t,\cdot) - G(\cdot)\|_{\mathcal{X}} \to 0$ as $t$ goes to $0$,
where $\overline{f}_i$ denotes the extension of $f_i$ by zero outside $\Omega_\e$.
} \end{proposition}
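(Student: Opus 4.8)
The plan is to pass to the self-similar variables \eqref{defselfsimilarvariables}, to recognize the equation for $\bar f_i$ as a vanishing perturbation of the two-dimensional rescaled vorticity equation, and to conclude by the rigidity of the Oseen vortex among complete trajectories of the latter, in the spirit of Section~$4$ of \cite{GS16} (where the case $n=1$ is treated). Write $X=(R,Z)$ for the self-similar space variable, let $\nabla,\Delta$ act in $X$, and set $\tau=\ln t$, so that $t\pa_t=\pa_\tau$ and $t\to0$ corresponds to $\tau\to-\infty$. Inserting \eqref{blowupotiui} and the expansion \eqref{unpackingu} of $u$ into \eqref{eqtoti}, a direct computation gives, on $\Omega_\e$,
\begin{equation*}
\pa_\tau f_i=\cL f_i-\al_i\,U_i\cdot\nabla f_i+\mathcal{R}_i,\qquad
\cL f\eqdefa\Delta f+\tfrac12 X\cdot\nabla f+f,
\end{equation*}
where $\cL$ is the planar Oseen operator, for which $\cL G=0$ and, since the planar Biot-Savart velocity of the radial profile $G$ is azimuthal, also $BS^0[G]\cdot\nabla G=0$. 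The remainder $\mathcal{R}_i$ consists of the curvature corrections $\frac{\e}{1+\e R}\pa_R f_i-\frac{\e^2}{(1+\e R)^2}f_i$, which are $O(\e)=O(\sqrt t)$ on compact sets, and of the contribution of the other filaments, $-\sum_{j\neq i}\al_j\,U_j\bigl(t,R+\frac{r_i-r_j}{\sqrt t},Z+\frac{z_i-z_j}{\sqrt t}\bigr)\cdot\nabla f_i$, which by \eqref{boundUi} is also $O(\sqrt t)$ on compact sets since $|x_i-x_j|>0$. The extension by zero is harmless: by \eqref{boundfi}, $f_i$ — and, after an interior parabolic estimate, $\nabla f_i$ — is super-exponentially small near the artificial boundary $1+\e R=0$, i.e.\ near $r=0$.

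For the uniform bounds, fix $\eta\in]0,1/2[$ in \eqref{boundfi}; then $e^{-\frac{1-\eta}{2}|X|^2}w\in L^1(\R^2)$, so \eqref{boundfi} dominates $\{\bar f_i(t)\}_{0<t<t_1}$ by a fixed element of $\cX$, and this family is bounded and tight in $\cX$. Moreover, \eqref{boundnablaoti} written in self-similar variables gives $\|\nabla f_i(t)\|_{L^\infty(\Omega_\e)}\leq C_0$ uniformly in $t$; combined with parabolic smoothing for the mild formulation \eqref{boundnablaoti1} of \eqref{eqtoti}, this produces uniform-in-$t$ bounds on $f_i$ and on its self-similar derivatives. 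Hence $\{\bar f_i(t)\}_{0<t<t_1}$ is precompact in $\cX$, and the rescaled solutions, read in the logarithmic time, form a family precompact for uniform convergence on compact subsets of space-time.

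Now let $t_k\to0$, set $\tau_k=\ln t_k\to-\infty$, and put $h_k(\tau,X)\eqdefa\bar f_i(e^{\tau_k+\tau},X)$ for $\tau\le0$. Along a subsequence $h_k\to h$ uniformly on compact subsets of $(-\infty,0]\times\R^2$; by \eqref{boundfi} and dominated convergence this also holds in $C\bigl((-\infty,0];\cX\bigr)$, and by \eqref{fiL1limit} one has $\int_{\R^2}h(\tau,X)\,dX=1$ for every $\tau\le0$, with $h\ge0$. Passing to the limit in the rescaled equation above — where $\e=e^{(\tau_k+\tau)/2}/r_i\to0$, so $BS^{\e}\to BS^0$ in \eqref{paraBS} and $\mathcal{R}_i\to0$ — shows that $h$ is a global solution of the planar rescaled vortex equation $\pa_\tau h=\cL h-\al_i\,BS^0[h]\cdot\nabla h$ with unit mass, positivity, and the uniform Gaussian bound. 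For such a solution the relative entropy $\cH(\tau)\eqdefa\int_{\R^2}h(\tau,X)\ln\!\bigl(h(\tau,X)/G(X)\bigr)\,dX$ is finite, nonnegative and non-increasing: the Fokker-Planck part of $\cL$ contributes $-\int h\,|\nabla\ln(h/G)|^2\le0$, while the Biot-Savart term contributes nothing, by $\dive BS^0[h]=0$, the antisymmetry of the planar Biot-Savart kernel, and $\nabla\ln G=-\tfrac12 X$. By LaSalle's invariance principle the $\alpha$-limit set of this complete, precompact trajectory lies in $\{h:\nabla\ln(h/G)\equiv0\}=\{cG:c>0\}$, and the unit-mass constraint forces $c=1$; hence $\cH(\tau)\equiv0$, so $h\equiv G$, and in particular $\bar f_i(t_k)\to G$ in $\cX$. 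Since this holds for every sequence $t_k\to0$ and $\{\bar f_i(t)\}$ is precompact in $\cX$, we conclude that $\bar f_i(t)\to G$ in $\cX$ as $t\to0$.

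The delicate point is the identification of the limit: bootstrapping the $L^\infty$ gradient bound \eqref{boundnablaoti} to enough compactness to pass to a classical limit, controlling the non-local term $\al_i U_i\cdot\nabla f_i$ uniformly in the tails so that the limiting nonlinearity is exactly the planar Biot-Savart one (through the $\e\to0$ limit of \eqref{paraBS}), and then invoking the rigidity of Oseen vortices among eternal solutions of the planar vortex equation — this last ingredient being the one borrowed from \cite{GS16}.
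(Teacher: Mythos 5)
Your proof is correct in outline but identifies the blow-up limit by a genuinely different mechanism from the paper's. The two arguments coincide up to the extraction step: both use the Gaussian bound \eqref{boundfi} together with the gradient bound \eqref{boundnablaoti} to get precompactness of $(\overline{f}_i(t))_{t}$ in $\cX$ (the paper packages this as boundedness in a space $\mathcal{X}_0$ compactly embedded in $\cX$), and both use \eqref{boundUi} and \eqref{boundfi} to show that the filaments $j\neq i$ contribute nothing in the limit. They part ways at the identification of the limit. The paper rescales in the original time variable, setting $\omega^{(m)}(s,y)=t_m\,\omega(t_m s,x_i+\sqrt{t_m}\,y)$ for $s\in(0,\infty)$, so that the limit is a forward solution of the planar Navier--Stokes equations, uniformly bounded in $L^1$, with initial trace $\al_i\delta_0$; the conclusion $h_s=G$ is then a direct citation of Proposition~1.3 of \cite{GallayWayne}. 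You instead pass to logarithmic self-similar time and obtain an ancient solution of the rescaled planar vorticity equation with unit mass, positivity and a uniform Gaussian bound, which you identify with $G$ through the relative entropy $\int h\ln(h/G)$ and LaSalle's invariance principle. This amounts to inlining the proof of the cited rigidity result (it is essentially the argument of \cite{GallayWayne} and \cite{GallagherGallay}), and it has the merit of showing explicitly why the size of $\al_i$ is harmless: the Biot--Savart term is orthogonal to the entropy dissipation. The cost is that several points you treat as routine genuinely need justification if you do not cite the result: strict positivity of the limit $h$ (so that $\ln(h/G)$ makes sense), finiteness of the relative Fisher information and the rigorous derivation of the dissipation identity under only an upper Gaussian bound, and uniform bounds on higher self-similar derivatives of $f_i$ so that the limit equation holds classically --- the paper only establishes \eqref{boundnablaoti} for the first gradient and transfers the remaining regularity to the velocity through \eqref{apriorimild}. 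None of these is a fatal gap, but each is precisely what the black-box citation is designed to avoid; either cite the rigidity statement directly, as the paper does, or supply these details.
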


 \begin{proof}
First, let us denote by $\mathcal{X}_0$ a subspace of $\mathcal{X}$, which is defined by the stronger norm
$$\|f\|_{\mathcal{X}_0}\eqdefa\|fw^{1-\eta}\|_{L^{\infty}(\mathbb{R}^2)} + \|\nabla f\|_{L^{\infty}(\mathbb{R}^2)},$$
where $\eta$ is a real number satisfying $0 < \eta < \frac 12$.
We have:
\begin{lemma}[Lemma 4.4 in \cite{GS16}]
The space $\mathcal{X}_0$ is compactly embedded in $\mathcal{X}$, and the unit ball in
$\mathcal{X}_0$ is closed for the topology induced by $\mathcal{X}$.
\end{lemma}

In the self-similar variables, the gradient bound for $\ot_i$, namely \eqref{boundnablaoti}, translates into
$$\|\nabla\overline{f}_i(t)\|_{L^{\infty}(\R^2)} < \infty,\quad\forall t\in]0,T[. $$
Combining this with the gaussian bound for $f_i$, \eqref{boundfi},
we know that, $(\overline{f}_i(t))_{0<t<T}$ is a bounded subset of $\mathcal{X}_0$, hence compact in $\mathcal{X}$.
Let $h_*$ be an accumulation point in $\mathcal{X}$ of $(\overline{f}_i(t))_{0<t<T}$ as $t$ goes to $0$,
and $(t_m)_{m\in\N}$ be the corresponding sequence of positive time satisfying
\begin{equation}\label{fitmlimit}
t_m\rightarrow 0,\quad \|\overline{f}_i(t_m) - h_*\|_{\mathcal{X}} \rightarrow 0 \quad \mbox{as}\quad m\rightarrow \infty.
\end{equation}

Now, let us temporarily consider the whole 3-D vorticity field $\omega$ and the whole 3-D velocity field $u$.
For any $m\in\N,~y\in\R^3$, and $s\in]0,t_m^{-1}T[$, we define the following sequence
 $$
 \left\{
 \begin{array}{lr}
  u^{(m)}(s,y) = \sqrt{t_m} u(t_ms, x_i + \sqrt{t_m}y) \\
  \omega^{(m)}(s,y) = t_m \omega(t_ms, x_i + \sqrt{t_m}y),
 \end{array}
 \right.
 $$
 where $x_i = (r_i,0,z_i)\in\R^3$. In other words, the vector fields $\om^{(m)},~u^{(m)}$ are defined by
 a self-similar blow-up of the original quantities $\om,~u$ near the point $x_i\in\R^3$ and near the initial time $t=0$.
 It is easy to verify that $\om,~u$ satisfy the 3-D vorticity equation:
 $$\pa_s\om^{(m)}+u^{(m)}\cdot\nabla\om^{(m)}-\Delta \om^{(m)}=
 \om^{(m)}\cdot\nabla u^{(m)},\quad\dive u^{(m)} = 0, \quad \curl u^{(m)} = \omega^{(m)},$$
for $s\in]0,t_m^{-1}T[,~y\in\R^3$.
The self-similar rescaling from $u$ to $u^{(m)}$ preserves the bounds given by \eqref{apriorimild},
precisely for all indices $k,~\ell\in\N$, we have the following {\it a priori} estimates
$$\|\pa_s^k\nabla_y^\ell u^{(m)}(s)\|_{L^\infty(\R^3)}\leq C_0 s^{-\bigl(\frac12+k+\frac{\ell}2\bigr)},
\quad s\in]0,t_m^{-1}T[,$$
which holds uniformly in $m$.
 Hence, up to an extraction, we can assume that
$$\om^{(m)}\rightarrow \overline{\omega},\quad u^{(m)}\rightarrow \overline{u},\quad\mbox{as}\quad m\rightarrow\infty,$$
with uniform convergence of both vector fields along with all their derivatives
on any compact subset of $]0,t_m^{-1}T[\times\R^3$.
Thus the limiting fields $\overline{\omega},~\overline{u}$ are smooth and satisfy
 \begin{equation}\label{overlineomegau}
  \partial_s \overline{\omega} + \overline{u} \cdot\nabla \overline{\omega}
  - \Delta \overline{\omega} = \overline{\omega} \cdot \nabla \overline{u},
  \quad\dive\overline{u}=0,\quad \curl\overline{u}=\overline{\omega}.
 \end{equation}
 The goal now is to relate $\overline{\omega}$ to $\omega_i$ and $\overline{f}_i$.
 The idea is that the other $\omega_j,~\overline{f}_j\ (j\neq i)$ should be eliminated by the blow-up procedure.
 Using the definitions, we get
 \begin{equation}\label{omegafifj}\begin{split}
\omega^{(m)}&(s,y)= t_m \omega(t_ms, x_i + \sqrt{t_m}y) \\
   &= t_m \omega(t_ms, \sqrt{(r_i + \sqrt{t_m}y_1)^2 + t_my_2^2}, 0, z_i + \sqrt{t_m}y_3)\\
   &=\Bigl( \frac {\al_i}s \overline{f}_i(t_ms,X_{ii}^{(m)}(s,y))
   +\sum_{j \neq i}\frac {\al_j}s \overline{f}_j(t_ms,X_{ij}^{(m)}(s,y))\Bigr)e_{\theta}(x_i + \sqrt{t_m}y),
 \end{split}\end{equation}
where
$$X_{ij}^{(m)}(s,y)\eqdefa \left( \frac{\sqrt{(r_i + \sqrt{t_m}y_1)^2 + t_my_2^2} - r_j}{\sqrt{t_ms}}, \frac{z_i-z_j + \sqrt{t_m}y_3}{\sqrt{t_ms}} \right).$$
If $i \neq j$, for any bounded subset $B \subset \mathbb{R}^3$ and any $y\in B$,
there exists a large constant $N_B$, such that for any $m>N_B$, there holds
$$|X_{ij}^{(m)}(s,y)|^2 \geq \frac{(r_i-r_j)^2+(z_i-z_j)^2}{2t_ms}. $$
Then the gaussian bound for $f_j$ \eqref{boundfi} entails
$$0\leq \overline{f}_j(t_ms, X_{ij}^{(m)}(s,y)) \leq C_{\eta,\al} \exp\Bigl\{- \frac{(1-\eta)|x_i-x_j|^2}{8t_ms}\Bigr\}. $$
Hence, the only contribution in the limit procedure $m \rightarrow \infty$ comes, as expected, from the $i$-th circular vortex.
Regarding $\overline{f}_i$, as shown before, $\overline{f}_i(\cdot,\cdot,t)$ is bounded in $\mathcal{X}_0$.
Thus for any fixed $s>0$, up to another extraction, there must exist some $h_s\in\mathcal{X}$ such that
\begin{equation}\label{fitmslimit}
\|\overline{f}_i(t_ms)-h_s\|_{\mathcal{X}} \rightarrow 0\quad
 \mbox{as}\quad m \rightarrow \infty.
 \end{equation}
The boundedness of $(\overline{f}_i(t_ms))_m$ in $\mathcal{X}_0$ implies that, this convergence of $(\overline{f}_i(t_ms))_m$ to $h_s$
also holds uniformly on any compact set of $\R^3$.
Therefore, taking the limit $m\rightarrow\infty$ on both sides of \eqref{omegafifj}
and noting that $e_{\theta}(x_i)=e_2=(0,1,0)$, we obtain
$$\overline{\omega}(s,y) = \frac{\al_i}s h_s\left(\frac{y_1}{\sqrt s}, \frac{y_3}{\sqrt s}\right)e_2
\eqdefa(0,\overline{\omega}_2(s,y_1,y_3),0). $$
Taking the limit $m\rightarrow\infty$ in \eqref{boundfi} and \eqref{fiL1limit}, we deduce
\begin{equation}\label{boundL1om2}
|\overline{\omega}_2(s,y_1,y_3)| \lesssim C_{\eta,\al}s^{-1} e^{- \frac{1-\eta}{4s}|y|^2},\quad
\int_{\R^2}\overline{\omega}_2(s,y_1,y_3)\, dy_1dy_3=\al_i.
\end{equation}

We now turn to the velocity field. Similarly as \eqref{omegafifj}, we can write
\begin{equation}\label{unpackingu}
u^{(m)}(s,y) = \frac{\al_i}{\sqrt s} U_i^\e(t_ms, X_{ii}^{(m)}(s,y))
+\sum_{j \neq i}\frac{\al_j}{\sqrt s} U_j^\e(t_ms, X_{ij}^{(m)}(s,y)).
\end{equation}
In view of \eqref{boundUi}, as $t_m\rightarrow0$, all $U_j^{\e}(t_ms, X_{ij}^{(m)}(s,y))$
for $j\neq i$ vanish, and only $U_i^\e(t_ms, X_{ii}^{(m)}(s,y))$ remains.
Regarding $U_i$, using \eqref{boundUi} again and taking the limit $m\rightarrow\infty$, we get
\begin{equation}\label{decayoverlineu}
|\overline{u}(s,y)| \lesssim (\sqrt s + |y_1| + |y_3|)^{-1}.
\end{equation}
Moreover, as shown in \eqref{overlineomegau}, $\overline{u}$ satisfies the following elliptic system
$$\dive\overline{u}=0,\quad \curl\overline{u}=\overline{\omega}.$$
This div-curl system has at most one solution with the decay property \eqref{decayoverlineu}, hence
$$\overline{u}(s,y)=\overline{u}_1(s,y_1,y_3)e_1+\overline{u}_3(s,y_1,y_3)e_3
=(\overline{u}_1(s,y_1,y_3),0,\overline{u}_3(s,y_1,y_3)),$$
where $(\overline{u}_1,\overline{u}_3)$ is the two dimensional velocity field obtained from
the scalar vorticity $\overline{\omega}_2$ via the Biot-Savart law in $\R^2$.

Summarizing, we have shown that the limiting vorticity $\overline{\omega}_2$,
together with the associated velocity $(\overline{u}_1,\overline{u}_3)$ solves the 2-D Navier-Stokes equations,
and it follows from \eqref{boundL1om2} that $\overline{\omega}_2(s,\cdot)$ is uniformly bounded in $L^1(\R^2)$
and converges weakly to the Dirac measure $\al_i\delta_0$ as $s\rightarrow 0$.
Then we deduce, by using Proposition $1.3$ in \cite{GallayWayne}, that
$\overline{\omega}_2(s,y_1,y_3)=\frac{\al_i}s G\left(\frac{y_1}{\sqrt s}, \frac{y_3}{\sqrt s}\right)$,
i.e. $h_s=G$ for any $s>0$. In particular, choosing $s=1$ so that $t_ms=t_m$, and comparing
\eqref{fitmlimit} with \eqref{fitmslimit}, we conclude that $h_*=G$,
which is the desired result.
\end{proof}

In view of Proposition \ref{propfilimitGaussian}, it is natural to make a further decomposition of $\omega$.
Let
$$d\eqdefa\min_{1\leq i<j\leq n}\bigl\{|x_i-x_j|,\, r_i\bigr\},$$
and $\chi:[0,\infty[\rightarrow[0,1]$ to be a smooth non-increasing cutoff function
such that $\chi=1$ on $[0,1/8]$ and $\chi$ vanishes outside $[0,1/4]$.
Let $f_0$ to be a function on $]0,T[\times\R^2$ defined as
$$f_0(t,x,y)\eqdefa G(x,y)\chi\bigl(\sqrt{t(x^2+y^2)}/d\bigr),\quad
(x,y)\in\R^2,\ t\in]0,T[,$$
and $\widetilde{f}_i$ to be a function on $]0,T[\times\Omega_\e^i$ defined as
\begin{equation}\label{decomfi}
\widetilde{f}_i(t,R,Z)=f_i(t,R,Z)-f_0(t,R,Z),\quad
(R,Z)\in\Omega_\e,\ t\in]0,T[.
\end{equation}
Then we can decompose $\ot$ further as follows:
\begin{equation}\label{decomomgaussian}
\ot(t,r,z)=\sum_{j=1}^n\Bigl(\frac{\al_j}{t}f_0(t,R_j,Z_j)
+\frac{\al_j}{t}\widetilde{f}_j(t,R_j,Z_j)\Bigr).
\end{equation}
And correspondingly, $u=BS[\ot]$ can be decomposed further into
\begin{equation}\begin{split}\label{decomugaussian}
u(t,r,z)=\sum_{j=1}^n&\Bigl(\frac{\al_j}{\sqrt t}U_{0,j}(t,R_j,Z_j)
+\frac{\al_j}{\sqrt t}\widetilde{U}_j(t,R_j,Z_j)\Bigr),\quad\mbox{where}\\
&U_{0,j}=BS^{\e_j}[f_0],\quad \widetilde{U}_j=BS^{\e_j}[\widetilde{f}_j].
\end{split}\end{equation}

\begin{rmk}\label{rmkboundaryf}
{\sl For any $j\in\{1,\cdots,n\}$, due to the cutoff function $\chi$, it is easy to see that
$f_0(t,R_j,Z_j)$ vanishes when $\sqrt t R<-d/4$, and thus vanishes when $\sqrt t R<-r_j/4$.
In particular, this implies that $f_0(t,R_j,Z_j)$ satisfies the Dirichlet boundary condition on $\pa\Omega_{\e_j}$,
and thus $\widetilde{f}_j(t,R_j,Z_j)$ also satisfies the Dirichlet boundary condition on $\pa\Omega_{\e_j}$.
}\end{rmk}

It is clear that $f_0(t)\in\cX$ for all $t\in]0,T[$, and $\|f_0(t)-G\|_\cX\rightarrow 0$ as $t\rightarrow 0$.
Thus the perturbation $\widetilde{f}_j(t)$ (extended by zero outside $\Omega_{\e_j}$)
belongs to $\cX$ for all $t\in]0,T[$, and
Proposition \ref{propfilimitGaussian} implies that $\|\widetilde{f}_j(t)\|_\cX\rightarrow 0$ as $t\rightarrow 0$.
In the next section, we shall give a more accurate quantitative rate of this convergence.

\section{Proof of Theorem \ref{thmmain}}

This section is devoted to the proof of Theorem \ref{thmmain}. In view of the decomposition \eqref{decomomgaussian},
to prove the uniqueness claim in Theorem \ref{thmmain}, we only need to show the perturbation part
$(\widetilde{f}_j)_{j=1}^n$ is uniquely determined. At the end of last section, we have shown that
$\|\widetilde{f}_j(t)\|_\cX\rightarrow 0$ as $t\rightarrow 0$, but this is not enough to prove uniqueness.
We shall give a more accurate quantitative rate of this convergence, which in particular implies the short time
estimate \eqref{ShortTimeAsymptotics}. This will be done in the first subsection.

After some modifications to the energy estimates in the proof of the short time estimate,
we can prove the uniqueness claim in Theorem \ref{thmmain}.This will be done in the second subsection.

\subsection{Short time asymptotics}

Using \eqref{eqtoti} and \eqref{blowupotiui}, we can derive the evolution equation satisfied by
the rescaled vorticity $f_i$ reads
\begin{equation}\label{eqtfi}
t \partial_t f_i(t,X) +\dive_* \bigl(\al_i U_i(t,X)f_i(t,X)+W_i(t,X)f_i(t,X)\bigr)
=(\cL f_i)(t,X)+\pa_R\Bigl(\frac{\e f_i(t,X)}{1+\e R}\Bigr),
\end{equation}
for $X\in\Omega_\e$ and $t\in]0,T[$, where the operator $\mathcal{L}$ is defined for a generic function $f$ by
$$\mathcal{L}f(X)\eqdefa\Delta_X f(X) + \frac X2 \cdot \nabla_X f(X) + f(X),$$
the operator $\dive_*$ is defined for a generic vector field $V(X)=V^r(X)e_r+V^z(X)e_z$ by
$$\dive_*\bigl(V(X)\bigr)\eqdefa \pa_R V^r(X)+\pa_Z V^z(X),$$
and $W_i$ stands for the other parts of the rescaled velocity:
$$W_i(t,X)\eqdefa\sum_{j\neq i} \al_j U_j(t,X_j),
\quad\mbox{where}\quad X_j=\frac{x-x_j}{\sqrt t}=X+\frac{x_i-x_j}{\sqrt t}.$$
Then we can deduce from \eqref{decomomgaussian},~\eqref{decomugaussian} and \eqref{eqtfi} that
\begin{equation}\label{eqttildefi}
t\partial_t \widetilde{f}_i +\al_i\dive_*(U_{0,i} \widetilde{f}_i + \widetilde{U}_i f_0
+\widetilde{U}_i \widetilde{f}_i)+\dive_* (W_if_i)
=\mathcal{L}\widetilde{f}_i +\pa_R\Bigl(\frac{\e \widetilde{f}_i}{1+\e R}\Bigr)+\mathcal{H},
\end{equation}
where
\begin{equation*}
\cH=-t\pa_t f_0+\cL f_0+\pa_R\Bigl(\frac{\e f_0(t,X)}{1+\e R}\Bigr)-\al_i\dive_*(U_{0,i}f_0).
\end{equation*}

And we shall define, following \cite{GS16}, the two types of energy for each vortex
\begin{equation}\begin{split}\label{defEj}
&E_j(t)\eqdefa\frac 12 \int_{\Omega_{\e_j}} \widetilde{f}_j(t,X_j) ^2 w(X_j)\,dX_j,\\
\mathcal{E}_j(t)\eqdefa\frac 12 \int_{\Omega_{\e_j}}& \Bigl( |\nabla\widetilde{f}_j(t,X_j)|^2 + (1+|X_j|^2) \widetilde{f}_j(t,X_j) ^2 \Bigr) w(X_j)\,dX_j,
\end{split}\end{equation}
as well as the total energies
$$E(t)\eqdefa\sum_{j=1}^n E_j(t), \qquad \mathcal{E}(t)\eqdefa\sum_{j=1}^n \mathcal{E}_j(t).$$

As we have pointed out in Remark \ref{rmkboundaryf} that, $\widetilde{f}_j$ satisfies the homogeneous Dirichlet condition on $\pa\Omega_{\e_j}$, thus although the integral in \eqref{defEj} is taken over the
time-dependent domain $\Omega_{\e_j}$, there is no contribution from the boundary when we differentiate
with respect to time. Hence we can get, by doing $L^2(\Omega_\e,w(X)dX)$ 
energy estimate to \eqref{eqttildefi} and integrating by parts, that
\begin{equation}\label{equalitydtE}
tE'_i(t)=A_i(t)+I_i(t),
\end{equation}
where
\begin{align*}
&A_i(t)=\int_{\Omega_\e}\Bigl(\mathcal{L}\widetilde{f}_i(t,X) 
+\pa_R\bigl(\frac{\e \widetilde{f}_i(t,X)}{1+\e R}\bigr)+\mathcal{H}(t,X)\\
&\qquad\qquad\qquad\qquad\qquad-\al_i\dive_*(U_{0,i} \widetilde{f}_i + \widetilde{U}_i f_0+\widetilde{U}_i \widetilde{f}_i)(t,X)\Bigr)\widetilde{f}_i(t,X)\cdot w(X)\,dX,\\
&I_i(t)=\int_{\Omega_\e}W_i(t,X)f_i(t,X)
\bigl(\nabla_X \widetilde{f}_i(t,X)+\frac X2 \widetilde{f}_i(t,X)\bigr)\cdot w(X)\,dX.
 \end{align*}

The main result of this subsection states as follows:
\begin{proposition}\label{propineqE'}
{\sl There exists some positive constant $\delta$
depending on the initial measure $\mu$, such that for $t$ sufficiently small, there holds
 \begin{equation}\label{ineqE'}
tE'_i(t)\leq-2\delta\mathcal{E}_i(t)+ C_0\sqrt t |\ln t|\mathcal{E}_i(t)^{\frac 12} +C E_i(t)^{\frac 12} \mathcal{E}_i(t)+\mathcal{R}_i(t),
\end{equation}
where the quantity $\mathcal{R}_i$ satisfies the inequality $0<\mathcal{R}_i(t)\leq e^{- C_0/t}$.
}\end{proposition}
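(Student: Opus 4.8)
The point of departure is the exact identity $tE'_i(t)=A_i(t)+I_i(t)$ recorded in \eqref{equalitydtE}; the whole proof consists in bounding $A_i$ and $I_i$ by the right-hand side of \eqref{ineqE'}. I would treat $A_i$ term by term according to the decomposition of the equation \eqref{eqttildefi}: the linear term $\int_{\Omega_\e}(\cL\widetilde f_i)\widetilde f_i\,w\,dX$, the rescaling correction $\int_{\Omega_\e}\pa_R\bigl(\tfrac{\e\widetilde f_i}{1+\e R}\bigr)\widetilde f_i\,w\,dX$, the source term $\int_{\Omega_\e}\cH\,\widetilde f_i\,w\,dX$, and the three Biot--Savart terms coming from $-\al_i\dive_*(U_{0,i}\widetilde f_i+\widetilde U_if_0+\widetilde U_i\widetilde f_i)$; the interaction term $I_i$ is handled separately. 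Two structural facts are used everywhere: $Gw\equiv\tfrac1{4\pi}$ (turning $w$-weighted integrals of $G\varphi$ into plain integrals of $\varphi$), and the fact that every boundary contribution from the moving domain $\Omega_\e=\{1+\e R>0\}$ lives on $\pa\Omega_\e=\{R=-1/\e\}$, i.e.\ at $|X|\sim r_i/\sqrt t$, where the Gaussian bound \eqref{boundfi} forces it to be $O(e^{-C_0/t})$.

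The backbone is the linear term. After integrating by parts, $\int_{\Omega_\e}(\cL\widetilde f_i)\widetilde f_i\,w=-\|\nabla\widetilde f_i\|_{\cX}^2+\|\widetilde f_i\|_{\cX}^2$, and the coercivity estimate for the Oseen operator $\cL$ (the spectral gap used in \cite{GS16,GallayWayne}) gives $-\int_{\Omega_\e}(\cL\widetilde f_i)\widetilde f_i\,w\ge 2\delta\,\mathcal E_i(t)$ \emph{as soon as} $\widetilde f_i$ is orthogonal in $\cX$ to the Gaussian $G$ --- the only non-coercive direction, since $\pa_jG$ already lies in the eigenspace of eigenvalue $-\tfrac12$. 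Now $\langle\widetilde f_i,G\rangle_{\cX}=\tfrac1{4\pi}\int_{\Omega_\e}(f_i-f_0)\,dX$, and I would show this is $O(e^{-C_0/t})$: integrating \eqref{eqtfi} over $\Omega_\e$, the $\dive_*$ term and the $\pa_R$ correction are total derivatives, $\int_{\Omega_\e}\cL f_i\,dX$ vanishes modulo the boundary, and all boundary terms are $O(e^{-C_0/t})$, so $t\tfrac{d}{dt}\int_{\Omega_\e}f_i\,dX=O(e^{-C_0/t})$; combined with $\int_{\Omega_\e}f_i\to1$ from \eqref{fiL1limit} and $\int f_0=1-O(e^{-C_0/t})$ (the cutoff $\chi$ removes only a super-exponentially small part of $G$), this yields $\int_{\Omega_\e}(f_i-f_0)\,dX=O(e^{-C_0/t})$. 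The coercivity defect then costs only $O(e^{-C_0/t})\mathcal E_i^{1/2}$, absorbed into the leading term and into $\mathcal R_i$.

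The remaining terms of $A_i$ are small corrections. On the support of $\widetilde f_i$ one has $|1+\e R|\gtrsim1$, so the rescaling correction is $O(\e)\mathcal E_i=O(\sqrt t)\mathcal E_i$. For $-\al_i\int\dive_*(U_{0,i}\widetilde f_i)\widetilde f_i\,w$, integration by parts (legitimate since $\widetilde f_i$ vanishes on $\pa\Omega_\e$) produces $\al_i\int U_{0,i}\cdot\nabla(\tfrac{\widetilde f_i^2}2)w+\al_i\int\tfrac{\widetilde f_i^2}2\,U_{0,i}\cdot\tfrac X2\,w$; using $\dive_*U_{0,i}=-\tfrac\e{1+\e R}U_{0,i}^r$ and $v^G\cdot X\equiv0$ for the Oseen velocity $v^G=BS^{2D}[G]$ (azimuthal, so only $U_{0,i}-v^G$ survives), this is again $O(\e)\mathcal E_i$. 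The cross term $-\al_i\int\dive_*(\widetilde U_if_0)\widetilde f_i\,w$, after integrating by parts and writing $\nabla f_0\approx-\tfrac X2G$, has leading part $\tfrac{\al_i}{8\pi}\int\chi\,(X\cdot\widetilde U_i)\,\widetilde f_i\,dX$; here the antisymmetry of the planar Biot--Savart kernel gives the crucial cancellation $\int(X\cdot BS^{2D}[\widetilde f_i])\,\widetilde f_i\,dX=0$, leaving only the $\e$-correction of the kernel \eqref{paraBS} and the $(1-\chi)$-tail, both $O(\e E_i+e^{-C_0/t})$. Finally the source term: the pieces of $\cH$ carrying a derivative of $\chi$ are supported where $|X|\sim d/\sqrt t$, so $Gw^{1/2}=\tfrac1{4\pi}e^{-|X|^2/8}\le e^{-C_0/t}$; the remaining pieces ($\pa_R(\tfrac{\e f_0}{1+\e R})$, the non-solenoidal part of $\dive_*(U_{0,i}f_0)$, and $-\chi\tfrac X2G\cdot U_{0,i}$ with again $v^G\cdot X\equiv0$) are of size $O(\e)$ --- \emph{except} that the $O(\e)$ correction kernel in \eqref{paraBS} involves $F_2$, for which only $s^{\sigma_2}F_2(s)$ is required bounded ($\sigma_2>0$ arbitrary), so $F_2$ may carry a logarithmic singularity at $s=0$ and contributes a factor $|\ln\e|\sim|\ln t|$. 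All in all $\|\cH(t)\|_{\cX}\lesssim\sqrt t\,|\ln t|$, whence $\int\cH\,\widetilde f_i\,w\lesssim\sqrt t\,|\ln t|\,E_i^{1/2}\le\sqrt t\,|\ln t|\,\mathcal E_i^{1/2}$; this is the origin of the logarithm in \eqref{ineqE'} and hence in \eqref{ShortTimeAsymptotics}.

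For the interaction term, \eqref{boundUi} gives $|W_i(t,X)|\lesssim\sum_{j\ne i}\bigl(1+|X+\tfrac{x_i-x_j}{\sqrt t}|\bigr)^{-1}\lesssim\sqrt t/d$ on $\{|X|\le d/(2\sqrt t)\}$, while on the complement $f_iw^{1/2}\le e^{-C_0/t}$; writing $f_i=\widetilde f_i+f_0$ and applying Cauchy--Schwarz, $I_i(t)\lesssim\sqrt t\,\mathcal E_i^{1/2}$. Finally the genuinely cubic term $-\al_i\int\dive_*(\widetilde U_i\widetilde f_i)\widetilde f_i\,w$: integrating by parts it reduces to $\tfrac{\al_i}4\int\widetilde f_i^2\,(\widetilde U_i\cdot X)\,w+\tfrac{\al_i}2\int\tfrac\e{1+\e R}\widetilde U_i^r\,\widetilde f_i^2\,w$, bounded by $\lesssim\|\widetilde U_i\widetilde f_i\|_{\cX}\,\mathcal E_i^{1/2}$, and the weighted Biot--Savart bound for $\|\widetilde U_i\|_{L^\infty}$ together with $\|\widetilde f_i\|_{L^\infty}\lesssim\|\nabla\widetilde f_i\|_{L^\infty}^{1/2}E_i^{1/4}$ (uniform gradient bound from the rescaled \eqref{boundnablaoti}) and $E_i\to0$ gives $\|\widetilde U_i\widetilde f_i\|_{\cX}\lesssim E_i^{1/2}\mathcal E_i^{1/2}$, so this term is $\lesssim E_i^{1/2}\mathcal E_i$. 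Collecting everything, absorbing all the $O(\sqrt t)\mathcal E_i$ contributions into $-2\delta\mathcal E_i$ for $t$ small, placing the super-exponentially small boundary and cutoff terms into $\mathcal R_i$ (so $0<\mathcal R_i\le e^{-C_0/t}$), and merging $O(\sqrt t|\ln t|)\mathcal E_i^{1/2}$ with $O(\sqrt t)\mathcal E_i^{1/2}$ into $C_0\sqrt t|\ln t|\mathcal E_i^{1/2}$, one obtains \eqref{ineqE'}. The delicate part is the package of weighted Biot--Savart estimates --- controlling $U_{0,i}-v^G$ and $\widetilde U_i$ in weighted $L^\infty$, and the bilinear form in $\cX$ --- together with the careful verification that $\langle\widetilde f_i,G\rangle_{\cX}$ and all the $\Omega_\e$-boundary contributions are genuinely $O(e^{-C_0/t})$; the spectral-gap coercivity itself is classical.
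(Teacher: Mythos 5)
Your proposal is correct and follows essentially the same route as the paper: the identity $tE_i'(t)=A_i(t)+I_i(t)$, the observation that the self-interaction block $A_i$ consists of exactly the terms treated in Proposition~4.5 of \cite{GS16} (which the paper simply cites, whereas you reconstruct its proof --- spectral gap of $\mathcal L$ modulo the $G$-direction, near-orthogonality $\int\widetilde f_i\,dX=O(e^{-C_0/t})$, the $F_2$-kernel origin of the logarithm, and the antisymmetry cancellations), and the same near/far splitting $\Omega_\e^{\pm}$ for the cross-interaction term $I_i$, using $|W_i|\lesssim\sqrt t$ on $\{|X|\le d/(4\sqrt t)\}$ via \eqref{boundUi} and the Gaussian smallness of $f_iw^{1/2}$ on the complement. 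The only step I would ask you to justify more carefully is the exponential smallness of $\int_{\Omega_\e}(f_i-f_0)\,dX$, which relies on the mass of $\ot_i$ being conserved up to boundary contributions at $r=0$ (the stretching term $u^r\ot_i/r$ must be combined with $\dive_*u=-u^r/r$ to put the transport in divergence form) and on Gaussian decay of $\ot_i$ and its gradient near the axis; this is exactly the content delegated to \cite{GS16}.
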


\begin{proof}
Noting that the terms in $A_i(t)$ are exactly the same as the ones appearing on the right-hand side of
the equality $(4.42)$ in \cite{GS16}. Thus using the Proposition $4.5$ in \cite{GS16},
we know that there exists some $\e_0\in]0,1/2[$, if $t>0$ is small enough so that $\e_i<\e_0$, then
\begin{equation}\label{estimateA(t)}
A_i(t)\leq  - 2 \delta \mathcal{E}_i(t) + C\sqrt t |\ln t| E_i(t)^{\frac 12} + C E_i(t)^{\frac 12} \mathcal{E}_i(t) + \mathcal{R}_i(t).
\end{equation}
In the following we shall concentrate on the interaction part $I_i(t)$.
Using the decomposition \eqref{decomfi} and \eqref{decomugaussian}, we can write
$$W_i(t,X)f_i(t,X)=\sum\limits_{j\neq i}\bigl(\al_jU_{0,j}(t,X_j)+\al_j\widetilde{U}_j(t,X_j)\bigr)
\bigl(f_0(t,X)+\widetilde{f}_i(t,X)\bigr).$$
Thus there are four types of integral terms in $I_i(t)$, which we handle separately.

Before proceeding, let us decompose $\Omega_{\e_j}$ into two parts, namely
$$\Omega_{\e_j}^+\eqdefa\Bigl\{X \in \Omega_{\e_j} \mbox{ s.t. } |X| > \frac{d}{4\sqrt t}\Bigr\}
,\quad \Omega_{\e_j}^-\eqdefa\Bigl\{X \in \Omega_{\e_j} \mbox{ s.t. } |X| \leq \frac{d}{4\sqrt t}\Bigr\}.$$

\noindent\textbf{Type 1:} $I_{i,1}(t)=\sum\limits_{j\neq i}\int_{\Omega_\e}\al_jU_{j}(t,X_j) f_0(t,X)
\cdot\bigl(\nabla_X+X/2\bigr) \widetilde{f}_i(t,X)\cdot w(X)\,dX.$

Due to the cutoff function $\chi$, we know that $f_0(t,X)$ vanishes whenever $|X| > \frac{d}{4\sqrt t}$.
Thus $I_{i,1}(t)$ actually only integrates on $\Omega_{\e}^-$, and for $X$ in $\Omega_{\e}^-$, we have
$$|X_j|=\Bigl|X+ \frac{x_i-x_j}{\sqrt t}\Bigr|\geq\frac{3d}{4\sqrt t}.$$
Then the estimate \eqref{boundUi} gives
\begin{equation}\label{boundUjOm-}
U_{j}(t,X_j)\leq C_0 \sqrt t.
\end{equation}
Thanks to this bound, the definition of $f_0$, and Cauchy inequality, we get
\begin{equation}\begin{split}\label{estimateI1}
|I_{i,1}(t)|&\leq C_0\sqrt t\sum\limits_{j\neq i}\int_{\Omega_\e^-}e^{-|X|^2/4}
\bigl(\nabla_X \widetilde{f}_i(t,X)+\frac X2 \widetilde{f}_i(t,X)\bigr)w(X)\,dX\\
&\leq C_0 \sqrt t \bigl\|e^{-|X|^2/8}\bigr\|_{L^2(\Omega_\e^-)}
\bigl\|\bigl(\nabla_X+X/2\bigr) \widetilde{f}_i(t,X)\cdot w(X)^{1/2}\bigr\|_{L^2(\Omega_\e^-)}\\
&\leq C_0 \sqrt t \mathcal{E}_i(t)^{\frac 12}.
\end{split}\end{equation}

\noindent\textbf{Type 2:} $I_{i,2}(t)=\sum\limits_{j\neq i}\int_{\Omega_\e} \al_jU_{j}(t,X_j)\widetilde{f}_i(t,X)
\cdot \bigl(\nabla_X+X/2\bigr) \widetilde{f}_i(t,X)\cdot w(X)\,dX.$

We decompose $I_{i,2}$ into two different parts according to the integra domain.
On $\Omega_\e^-$, by using the bound \eqref{boundUjOm-} and Cauchy inequality again, we obtain
\begin{equation}\label{estimateI2-}
\Bigl|\int_{\Omega_\e^-}U_{j}(t,X_j) \widetilde{f}_i(t,X)
\cdot \bigl(\nabla_X+X/2\bigr) \widetilde{f}_i(t,X)\cdot w(X)\, dX \Bigr|
\leq C_0 \sqrt t E_i(t)^{\frac 12} \mathcal{E}_i(t)^{\frac 12}.
 \end{equation}

To handle the integral on $\Omega_\e^+$, a mere application of \eqref{boundUi} gives
\begin{equation}\label{boundU0i}
\|U_{j}\|_{L^{\infty}_T(L^\infty(\Omega_{\e_j}))}\leq C_0.
\end{equation}
And it follows from the Gaussian bound for $f_i$ \eqref{boundfi} and the fact that $f_0$ vanishes
on $\Omega_\e^+$ that, the same Gaussian bound also holds for $\widetilde{f}_i$, precisely
\begin{equation}\label{pointboundfitilde}
0<\widetilde{f}_i(t,X) \leq C_{\eta,\al}e^{ - \frac{1-\eta}{4}|X|^2},\quad\forall X\in\Omega_\e^+.
\end{equation}
Using the above bounds \eqref{boundU0i} and \eqref{pointboundfitilde} with $\eta=\frac14$, we get
\begin{align*}
\Bigl|\int_{\Omega_\e^+}U_{j}(t,X_j) \widetilde{f}_i(t,X)
\cdot \bigl(\nabla_X+X/2\bigr) \widetilde{f}_i(t,X)\cdot w(X)\, dX \Bigr| 
& \leq C_0\|\widetilde{f}_i(t)w^{\frac 12}\|_{L^2(\Omega_\e^+)} \mathcal{E}_i(t)^{\frac 12}\\
&\leq C_0e^{-\frac{d^2}{256t}}\mathcal{E}_i(t)^{\frac 12}.
\end{align*}
Combining this with the estimate \eqref{estimateI2-}, we finally get
\begin{equation}\label{estimateI2}
|I_{i,2}(t)|\leq  C_0 \sqrt t E_i(t)^{\frac 12} \mathcal{E}_i(t)^{\frac 12}
+C_0e^{-\frac{d^2}{256t}}\mathcal{E}_i(t)^{\frac 12}.
\end{equation}

Substituting the estimates \eqref{estimateA(t)},~\eqref{estimateI1} and\eqref{estimateI2}
,and using the trivial bounds
$$E_i \leq \mathcal{E}_i \leq \mathcal{E}, \quad  E_i \leq E$$
allows us to obtain
\begin{align*}
tE'_i(t) \leq  - 2 \delta \mathcal{E}_i(t) +& C\sqrt t |\ln t| E_i(t)^{\frac 12} + C E_i(t)^{\frac 12} \mathcal{E}_i(t) + \mathcal{R}_i(t)\\
&+C_0\sqrt t \mathcal{E}_i(t)^{\frac 12}
+ C_0 \sqrt t E_i(t)^{\frac 12} \mathcal{E}_i(t)^{\frac 12}
+C_0e^{-\frac{d^2}{256t}}\mathcal{E}_i(t)^{\frac 12}.
\end{align*}
Recalling that $E(t)$ goes to $0$ as $t$ goes to $0$ yields the simplified bound
$$tE'_i(t)\leq-2\delta\mathcal{E}_i(t)+ C_0\sqrt t |\ln t|\mathcal{E}_i(t)^{\frac 12} +C E_i(t)^{\frac 12} \mathcal{E}_i(t)+\mathcal{R}_i(t),$$
which is the desired differential inequality. This completes the proof of this proposition.
\end{proof}

\begin{proof}[Proof of the estimate \eqref{ShortTimeAsymptotics}]
Applying Young's inequality to \eqref{ineqE'} gives
\begin{equation}\label{tE'1}
tE'_i(t)\leq-\frac32 \delta\mathcal{E}_i(t)+ C_0 t |\ln t|^2+C E_i(t)^{\frac 12} \mathcal{E}_i(t)+\mathcal{R}_i(t).
\end{equation}
Recalling that by definition $\e_i=\sqrt t/r_i$ and $E(t)$ goes to $0$ as $t$ goes to $0$, 
thus there exists some small constant
$t_0$ depending only on the initial measure $\mu$, such that
both $\e_i<\e_0$ and $E_i(t)^{1/2}<\delta/2$ hold whenever $t<t_0$.
Combining this with the facts that $E_i\leq\mathcal{E}_i$ and $0<\mathcal{R}_i(t)\leq e^{- C_0/t}$,
we can get from \eqref{tE'1}, for $t<t_0$, that
\begin{align*}
tE'_i(t)&\leq-\delta\mathcal{E}_i(t)+ C_0 t |\ln t|^2+\mathcal{R}_i(t)\\
&\leq-\delta E_i(t)+ C_0 t |\ln t|^2.
\end{align*}
Integrating this differential inequality yields the bound
\begin{equation}\label{rateEi}
E_i(t)\leq C_0 t^{-\delta}\int_0^t s^\delta|\ln s|^2\,ds
\leq C_0 t|\ln t|^2.
\end{equation}
Then in view of the definition \eqref{defEj},
the above inequality leads to
$$\|f_i(t)-f_0(t)\|_{L^1(\Omega_\e)}=\|\widetilde{f}_i\|_{L^1(\Omega_\e)}
\leq C E_i^{1/2}(t)\leq C_0 \sqrt t|\ln t|.$$
And since $f_0$ is extremely close to $G$, we finally obtain
\begin{equation}\begin{split}\label{fi-G}
\|f_i(t)-G\|_{L^1(\Omega_\e)}&\leq
\|f_i(t)-f_0(t)\|_{L^1(\Omega_\e)}+\|f_0(t)-G\|_{L^1(\Omega_\e)}\\
&\leq C_0\sqrt t|\ln t|+e^{-C_0/t}\leq C_0\sqrt t|\ln t|.
\end{split}\end{equation}
Returning to the original variables, and summing up over $i$, gives exactly the short time estimate
\eqref{ShortTimeAsymptotics} for $t<t_0$.
\end{proof}

\subsection{Uniqueness}

The purpose of this final subsection is to prove the uniqueness result in Theorem \ref{thmmain}.
Assume that $\oto,~\ott\in\cC\bigl(]0,T[, L^1(\Omega) \cap L^{\infty}(\Omega)\bigr)$
are two mild solutions to the vorticity equation \eqref{NSom} satisfying \eqref{thmcondi}.
Introducing the self-similar variables and decompose these two solutions just as what we have done
in Subsection $2.2$, precisely for $\ell=1,2$, we write
$$\otl(t,r,z)=\sum_{j=1}^n\frac{\al_j}{t}f_j^{(\ell)}(t,R_j,Z_j)
=\sum_{j=1}^n\Bigl(\frac{\al_j}{t}f_0(t,R_j,Z_j)
+\frac{\al_j}{t}\widetilde{f}_j^{(\ell)}(t,R_j,Z_j)\Bigr),$$
and correspondingly, $u^{(\ell)}=BS[\otl]$ can be decomposed into
$$u(t,r,z)^{(\ell)}=\sum_{j=1}^n \frac{\al_j}{\sqrt t}U_j^{(\ell)}(t,R_j,Z_j)
=\sum_{j=1}^n\Bigl(\frac{\al_j}{\sqrt t}U_{0,j}(t,R_j,Z_j)
+\frac{\al_j}{\sqrt t}\widetilde{U}_j^{(\ell)}(t,R_j,Z_j)\Bigr).$$
The differences of the rescaled solutions will be denoted by
$$\widetilde{f}_i^\Delta \eqdefa f_i^{(1)} - f_i^{(2)} = \widetilde{f}_i^{(1)} - \widetilde{f}_i^{(2)},
\quad\widetilde{U}_i^\Delta \eqdefa U_i^{(1)} - U_i^{(2)} = \widetilde{U}_i^{(1)} - \widetilde{U}_i^{(2)}.$$
The evolution equation for $\widetilde{f}_i^\Delta$ reads
\begin{equation}\begin{split}\label{eqtdifferencefi}
t\partial_t &\widetilde{f}_i^\Delta+\al_i\dive_*(U_{0,i} \widetilde{f}_i^\Delta + \widetilde{U}_i^\Delta f_0)
+\al_i\dive_*(\widetilde{U}_i^{(1)} \widetilde{f}_i^{(1)}-\widetilde{U}_i^{(2)} \widetilde{f}_i^{(2)})\\
&+\dive_*(W_{0,i} \widetilde{f}_i^\Delta + \widetilde{W}_i^\Delta f_0)
+\dive_*(\widetilde{W}_i^{(1)} \widetilde{f}_i^{(1)}-\widetilde{W}_i^{(2)} \widetilde{f}_i^{(2)})
=\mathcal{L}\widetilde{f}_i^\Delta +\pa_R\Bigl(\frac{\e \widetilde{f}_i^\Delta}{1+\e R}\Bigr),
\end{split}\end{equation}
where
$$W_{0,i}(t,X)\eqdefa\sum_{j\neq i} \al_j U_{0,j}(t,X_j),\quad
\widetilde{W}_i^{(\ell)}(t,X)\eqdefa\sum_{j\neq i} \al_j \widetilde{U}_j^{(\ell)}(t,X_j).$$
In analogy with \eqref{defEj}, the energies for each solution are straightforwardly denoted by
$$E_j^{(\ell)}(t)\eqdefa\frac 12 \int_{\Omega_{\e_j}} \widetilde{f}^{(\ell)}_j(t,X_j) ^2 w(X_j)\,dX_j,
\quad E^{(\ell)}(t)\eqdefa\sum_{j=1}^n E_j^{(\ell)}(t),$$
$$\mathcal{E}_j^{(\ell)}(t)\eqdefa\frac 12 \int_{\Omega_{\e_j}}
\Bigl( |\nabla\widetilde{f}_j^{(\ell)}(t,X_j)|^2
+(1+|X_j|^2)\widetilde{f}_j^{(\ell)}(t,X_j) ^2 \Bigr) w(X_j)\,dX_j,
\  \mathcal{E}^{(\ell)}(t)\eqdefa\sum_{j=1}^n \mathcal{E}_j^{(\ell)}(t),$$
as well as the energies for the difference 
$$E_j^\Delta(t)\eqdefa\frac 12 \int_{\Omega_{\e_j}} \widetilde{f}_j^\Delta(t,X_j) ^2 w(X_j)\,dX_j,
\quad E^\Delta(t)\eqdefa\sum_{j=1}^n E_j^\Delta(t),$$
$$\mathcal{E}_j^\Delta(t)\eqdefa\frac 12 \int_{\Omega_{\e_j}}
\Bigl( |\nabla\widetilde{f}_j^\Delta(t,X_j)|^2 
+(1+|X_j|^2)\widetilde{f}_j^\Delta(t,X_j) ^2 \Bigr) w(X_j)\,dX_j,
\quad \mathcal{E}^\Delta(t)\eqdefa\sum_{j=1}^n \mathcal{E}_j^\Delta(t).$$
In view of \eqref{rateEi}, combining with the elementary fact that $E_j^\Delta\leq
2\bigl(E_j^{(1)}+E_j^{(2)}\bigr)$, we know that $E_j^\Delta(t)$ also decays to $0$ with rate 
at least $t|\ln t|^2$ as $t\rightarrow 0$. We believe that $E_j^\Delta(t)$ decays faster than $E_j^{(\ell)}$
since the source $\cH$ and $\dive_*(W_{0,i}f_0)$ has disappeared when taking the difference of the equations
for $ f_i^{(1)}$ and $f_i^{(2)}$. Precisely, we have:

\begin{proposition}\label{propEdifference}
{\sl There exists a positive time $t_1$ such that for all $0<t<t_1$, there holds
\begin{equation}\label{estimateEDelta}
E^\Delta(t) \leq e^{-C_0/t}.
\end{equation}
}\end{proposition}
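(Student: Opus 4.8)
The plan is to run, on the difference equation \eqref{eqtdifferencefi}, the same weighted $L^2(\Omega_\e,w(X)\,dX)$ energy estimate used in the proof of Proposition~\ref{propineqE'}, and to exploit the structural simplification produced by subtracting the two equations: the source $\cH$ and the term $\dive_*(W_{0,i}f_0)$ both disappear, and these are exactly the pieces that generated the non-absorbable forcing $C_0\sqrt t|\ln t|\,\mathcal{E}_i(t)^{\frac12}$ in \eqref{ineqE'}. What remains of the inhomogeneity is either exponentially small in $1/t$ or carries an extra factor that tends to $0$ as $t\to0$, so a Gronwall-type argument will yield the bound \eqref{estimateEDelta}.

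Since $\widetilde f_i^{(1)}$ and $\widetilde f_i^{(2)}$ satisfy the homogeneous Dirichlet condition on $\pa\Omega_\e$ (Remark~\ref{rmkboundaryf}), so does $\widetilde f_i^{\Delta}$, and testing \eqref{eqtdifferencefi} against $\widetilde f_i^{\Delta}\,w$ and integrating by parts (using $\nabla w=\frac X2 w$) gives, with no boundary contribution, an identity $t(E_i^{\Delta})'(t)=A_i^{\Delta}(t)+I_i^{\Delta}(t)$, where $A_i^{\Delta}$ collects the self-interaction terms of the $i$-th vortex — those containing $\cL\widetilde f_i^{\Delta}$, the $\e$-correction $\pa_R\bigl(\e\widetilde f_i^{\Delta}/(1+\e R)\bigr)$, and $\al_i\dive_*(U_{0,i}\widetilde f_i^{\Delta}+\widetilde U_i^{\Delta}f_0+\widetilde U_i^{\Delta}\widetilde f_i^{(1)}+\widetilde U_i^{(2)}\widetilde f_i^{\Delta})$ — while $I_i^{\Delta}=\int_{\Omega_\e}\bigl(W_{0,i}\widetilde f_i^{\Delta}+\widetilde W_i^{\Delta}f_0+\widetilde W_i^{(1)}\widetilde f_i^{(1)}-\widetilde W_i^{(2)}\widetilde f_i^{(2)}\bigr)\cdot\bigl(\nabla_X+X/2\bigr)\widetilde f_i^{\Delta}\,w\,dX$ is the interaction part. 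The terms of $A_i^{\Delta}$ are exactly those of the equality $(4.42)$ in \cite{GS16}, with $\widetilde f_i$ replaced by $\widetilde f_i^{\Delta}$, with the source $\cH$ deleted, and with $\widetilde U_i\widetilde f_i$ replaced by $\widetilde U_i^{\Delta}\widetilde f_i^{(1)}+\widetilde U_i^{(2)}\widetilde f_i^{\Delta}$. Re-running the computation behind Proposition~$4.5$ of \cite{GS16} — the spectral gap of the Oseen linearization on $\cX$, the $\e$-correction being $O(\sqrt t)$, the cutoff discrepancy between $(U_{0,i},f_0)$ and the exact Oseen pair being exponentially small — and estimating the quadratic difference via the weighted Biot--Savart bounds of \cite{GS16} together with the pointwise control $|\widetilde f_i^{(\ell)}(t,X)|\le C_{\eta,\al}e^{-(1-\eta)|X|^2/4}$ on $\Omega_\e$ (which follows from $0\le f_0\le G$ and \eqref{boundfi}), one obtains
\[
A_i^{\Delta}(t)\le -2\delta\,\mathcal{E}_i^{\Delta}(t)+C\bigl(E^{(1)}(t)^{\frac12}+E^{(2)}(t)^{\frac12}\bigr)\mathcal{E}_i^{\Delta}(t)+\mathcal{R}_i^{\Delta}(t),\qquad |\mathcal{R}_i^{\Delta}(t)|\le e^{-C_0/t}.
\]
The crucial point is that, $\cH$ being absent, the $C\sqrt t|\ln t|\,E_i(t)^{\frac12}$ term of \eqref{estimateA(t)} no longer appears.

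For the interaction part I would split $\Omega_\e=\Omega_\e^{+}\cup\Omega_\e^{-}$ exactly as in the proof of Proposition~\ref{propineqE'}. On $\Omega_\e^{+}$, $f_0$ vanishes and $\widetilde f_i^{\Delta}$ obeys the Gaussian bound $|\widetilde f_i^{\Delta}|\le 2C_{\eta,\al}e^{-(1-\eta)|X|^2/4}$ (as in \eqref{pointboundfitilde}), so each of the four pieces of $I_i^{\Delta}$ is bounded by $e^{-C_0/t}\mathcal{E}_i^{\Delta}(t)^{\frac12}$. On $\Omega_\e^{-}$ one has $|X_j|=|X+(x_i-x_j)/\sqrt t|\ge 3d/(4\sqrt t)$ for $j\ne i$, hence $|W_{0,i}(t,X)|\le C_0\sqrt t$ by the bound \eqref{boundUi}; and applying the Biot--Savart estimate underlying \eqref{boundUi} to $\widetilde f_j^{\Delta}$ and $\widetilde f_j^{(\ell)}$ and using their Gaussian tails gives, on this set, $|\widetilde W_i^{\Delta}(t,X)|\le C_0\bigl(\sqrt t\,E^{\Delta}(t)^{\frac12}+e^{-C_0/t}\mathcal{E}^{\Delta}(t)^{\frac12}\bigr)$ and $|\widetilde W_i^{(\ell)}(t,X)|\le C_0\bigl(\sqrt t\,E^{(\ell)}(t)^{\frac12}+e^{-C_0/t}\mathcal{E}^{(\ell)}(t)^{\frac12}\bigr)$. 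Since moreover $\widetilde f_i^{\Delta}$ contributes a factor $E_i^{\Delta}(t)^{\frac12}$ in $L^2(\Omega_\e,w)$ and $f_0$ is square-integrable against $w$, Cauchy--Schwarz then yields
\[
|I_i^{\Delta}(t)|\le C_0\sqrt t\,\bigl(E^{\Delta}(t)^{\frac12}+E^{(1)}(t)^{\frac12}+E^{(2)}(t)^{\frac12}\bigr)\mathcal{E}^{\Delta}(t)^{\frac12}+e^{-C_0/t}\mathcal{E}^{\Delta}(t)^{\frac12}.
\]
Unlike Type~1 and Type~2 in the earlier proof, there is now no surviving term $W_{0,i}f_0$, so every interaction term carries a genuinely small factor ($\sqrt t$, $E^{\Delta}(t)^{\frac12}$ or $E^{(\ell)}(t)^{\frac12}$) or is exponentially small.

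Summing $t(E_i^{\Delta})'=A_i^{\Delta}+I_i^{\Delta}$ over $i$, using $E^{\Delta}\le\mathcal{E}^{\Delta}$ and $E_i^{(\ell)}\le E^{(\ell)}$, applying Young's inequality, and recalling that $E^{(1)}(t),E^{(2)}(t)$ and $E^{\Delta}(t)$ all tend to $0$ as $t\to0$ (so that for $t$ below some $t_1$ all the small coefficients are absorbed into $-\delta\mathcal{E}^{\Delta}$), one arrives at
\[
t(E^{\Delta})'(t)\le -\delta\,E^{\Delta}(t)+e^{-C_0/t},\qquad 0<t<t_1.
\]
Multiplying by $t^{\delta-1}$ gives $\bigl(t^{\delta}E^{\Delta}(t)\bigr)'\le t^{\delta-1}e^{-C_0/t}$; since $E^{\Delta}(t)\lesssim t|\ln t|^2$ (as noted just before the statement, from \eqref{rateEi} applied to both solutions), $t^{\delta}E^{\Delta}(t)\to0$ as $t\to0$, so integrating from $0$ to $t$ and using $e^{-C_0/s}\le e^{-C_0/(2s)}e^{-C_0/(2t)}$ for $0<s\le t\le1$,
\[
t^{\delta}E^{\Delta}(t)\le\int_0^t s^{\delta-1}e^{-C_0/s}\,ds\le e^{-C_0/(2t)}\int_0^1 s^{\delta-1}e^{-C_0/(2s)}\,ds\le C_0\,e^{-C_0/(2t)},
\]
whence $E^{\Delta}(t)\le e^{-C_0/t}$ after relabelling the constant and shrinking $t_1$, which is \eqref{estimateEDelta}. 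The main obstacle is the second paragraph: one must verify carefully that the modifications of Proposition~$4.5$ of \cite{GS16} genuinely remove the $\sqrt t|\ln t|$ forcing while preserving the spectral-gap coercivity $-2\delta\,\mathcal{E}_i^{\Delta}$, and control the quadratic difference $\widetilde U_i^{\Delta}\widetilde f_i^{(1)}+\widetilde U_i^{(2)}\widetilde f_i^{\Delta}$ with the correct weighted Biot--Savart inequalities; the interaction estimate of the third paragraph is then a routine variant of the Type~1 and Type~2 computations already carried out above.
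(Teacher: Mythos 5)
Your proposal follows essentially the same route as the paper: the same weighted energy identity $t(E_i^\Delta)'=A_i^\Delta+I_i^\Delta$, the same observation that the cancellation of $\cH$ and of $\dive_*(W_{0,i}f_0)$ removes the $\sqrt t\,|\ln t|$ forcing, the same splitting $\Omega_\e=\Omega_\e^+\cup\Omega_\e^-$ with the far-field bound $|W_{0,i}|\leq C_0\sqrt t$ on $\Omega_\e^-$ and Gaussian-tail exponential smallness on $\Omega_\e^+$, the same citation of the $A_i^\Delta$ estimate from \cite{GS16} (their $(4.71)$, i.e.\ \eqref{ineqADelta}), and the same Gronwall integration against $t^{\delta-1}$. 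The only genuine methodological variation is in the quadratic interaction term $\widetilde W_i^{(1)}\widetilde f_i^{(1)}-\widetilde W_i^{(2)}\widetilde f_i^{(2)}$: the paper writes it as $\widetilde W_i^\Delta\widetilde f_i^{(1)}+\widetilde W_i^{(2)}\widetilde f_i^\Delta$ and controls $\widetilde W_i^\Delta$, $\widetilde W_i^{(2)}$ in $L^\infty(\Omega_\e)$ globally via the Biot--Savart interpolation inequality \eqref{GS15 prop2.3ii} and Gagliardo--Nirenberg, whereas you use the $\Omega_\e^\pm$ pointwise splitting there too; both work.

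One concrete defect: your displayed bound for $|I_i^\Delta(t)|$ is too weak as literally written. The terms $C_0\sqrt t\,E^{(1)}(t)^{1/2}\mathcal{E}^\Delta(t)^{1/2}$ and $C_0\sqrt t\,E^{(2)}(t)^{1/2}\mathcal{E}^\Delta(t)^{1/2}$, carrying no second factor of a difference energy, cannot be ``absorbed into $-\delta\mathcal{E}^\Delta$''; Young's inequality turns them into a forcing of size $C_0\,t\,E^{(\ell)}(t)\sim t^2|\ln t|^2$ by \eqref{rateEi}, which is not $e^{-C_0/t}$, and the resulting Gronwall argument would then only yield $E^\Delta(t)\lesssim t^2|\ln t|^2$ rather than \eqref{estimateEDelta}. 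Your own derivation actually produces the correct structure --- each occurrence of $E^{(\ell)}(t)^{1/2}$ arises multiplied by $E^\Delta(t)^{1/2}$ or $E_i^\Delta(t)^{1/2}$ (e.g.\ $\widetilde W_i^{(2)}\widetilde f_i^\Delta$ gives $C_0\sqrt t\,E^{(2)}(t)^{1/2}E_i^\Delta(t)^{1/2}\mathcal{E}_i^\Delta(t)^{1/2}\leq C_0\sqrt t\,\mathcal{E}^\Delta(t)$, which is absorbable) --- so every contribution is either $o(1)\cdot\mathcal{E}^\Delta(t)$ or $e^{-C_0/t}\mathcal{E}^\Delta(t)^{1/2}$, exactly as in the paper's \eqref{estimateIiDelta}, \eqref{estimateIIiDelta} and \eqref{estimateIIIiDelta}. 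The summary display should be corrected accordingly; with that fix the argument is complete and matches the paper's proof.
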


\begin{proof}
Similarly as in the proof of Proposition \ref{propineqE'}, by doing an $L^2(\Omega_\e,w(X)dX)$
energy estimate to \eqref{eqtdifferencefi} and integrating by parts, we obtain
\begin{equation}\label{equalitydtEi}
t\frac{d}{dt}E^\Delta_i(t)=A^\Delta_i(t)+I_i^\Delta(t),
\end{equation}
where
\begin{align*}
&A_i^\Delta(t)=\int_{\Omega_\e}\Bigl(\mathcal{L}\widetilde{f}_i^\Delta(t,X)
+\pa_R\bigl(\frac{\e \widetilde{f}^\Delta_i(t,X)}{1+\e R}\bigr)
-\al_i\dive_*(U_{0,i} \widetilde{f}_i^\Delta + \widetilde{U}_i^\Delta f_0)\\
&\qquad\qquad\qquad\qquad\qquad
-\al_i\dive_*(\widetilde{U}_i^{(1)} \widetilde{f}_i^{(1)}-\widetilde{U}_i^{(2)} \widetilde{f}_i^{(2)})
\Bigr)\widetilde{f}_i^\Delta(t,X)\cdot w(X)\,dX,\\
&I_i^\Delta(t)=\int_{\Omega_\e}\bigl(W_{0,i} \widetilde{f}_i^\Delta + \widetilde{W}_i^\Delta f_0
+\widetilde{W}_i^{(1)} \widetilde{f}_i^{(1)}-\widetilde{W}_i^{(2)} \widetilde{f}_i^{(2)}\bigr)(t,X)\cdot
\bigl(\nabla_X+X/2\bigr)\widetilde{f}_i^\Delta(t,X)\cdot w(X)\,dX.
\end{align*}

First, the estimate $(4.71)$ of \cite{GS16} claims that there exists some positive constant $\delta$
and some $\e_0\in]0,1[$ such that as long as $\e<\e_0$, there holds
\begin{equation}\label{ineqADelta}
A_i^\Delta(t)\leq-2\delta\mathcal{E}_i^\Delta(t)+C\bigl(E_i^{(1)}(t)^{\frac 12}+E_i^{(2)}(t)^{\frac 12}\bigr) 
\mathcal{E}_i^\Delta(t)+\mathcal{R}_i^\Delta(t),
\end{equation}
where the quantity $\mathcal{R}^\Delta_i$ satisfies the inequality $0<\mathcal{R}^\Delta_i(t)\leq e^{- C_0/t}$.
We mention that the terms with type $C_0\sqrt t |\ln t|\mathcal{E}_i(t)^{\frac 12}$
in \eqref{estimateA(t)} does not appear here, due to the cancellation of the source term $\cH$ when taking
the difference.

For the interaction part $I_i^\Delta(t)$, thanks to the cancellation of $\dive_*(W_{0,i}f_0)$,
there are only three types of integral terms, which we handle separately in the following.

\noindent\textbf{Type 1:} $I_{i,1}^\Delta(t)=\int_{\Omega_\e} W_{0,i}(t,X) \widetilde{f}_i^\Delta(t,X)\cdot
\bigl(\nabla_X+X/2\bigr)\widetilde{f}_i^\Delta(t,X)\cdot w(X)\,dX.$

We decompose $I_{i,1}^\Delta$ into two different parts according to the integra domain.
On $\Omega_\e^-$, we have the point-wise estimate:
\begin{lem} \label{lempointboundU0j}
{\sl For any $j\neq i$, and any $X_j$ in $\Omega_{\e_j}^-$ (i.e. $X$ in $\Omega_\e^-$), we have
 $$|U_{0,j}(t,X_j)|  \leq C_0 \sqrt t. $$
}\end{lem}

\begin{proof}
Using the explicit formula \eqref{paraBS}, and the fact that
$f_0$ supports in $\Omega_\e^-$, we get
\begin{align*}
&U_{0,j}^r(t,X_j)
=\frac{1}{2\pi}\int_{\Omega_\e^-}\sqrt{(1+\e_j R')(1+\e_j R_j)^{-1}}
F_1(\xi_j^2)\frac{Z_j-Z'}{|X_j-X'|^2} f_0(t,X')\,dX',\\
&U_{0,j}^z(t,X_j)
=-\frac{1}{2\pi}\int_{\Omega_\e^-}\sqrt{(1+\e_j R')(1+\e_j R_j)^{-1}}
F_1(\xi_j^2)\frac{R_j-R'}{|X_j-X'|^2} f_0(t,X')\,dX'\\
&\qquad\qquad\quad+\frac{\e_j}{4\pi}\int_{\Omega_\e^- }\sqrt{(1+\e_j R')(1+\e_j R_j)^{-3}}
\bigl(F_1(\xi_j^2)+F_2(\xi_j^2)\bigr) f_0(t,X')\,dX',
\end{align*}
where
$$\xi_j^2=\e_j^2|X_j-X'|^2(1+\e_j R_j)^{-1}(1+\e_j R')^{-1}.$$
For $X$ and $X'$ in $\Omega_\e^-$, we have
$$|X_j-X'|=\Bigl|X-X' + \frac{x_i-x_j}{\sqrt t}\Bigr| \in\bigl[\frac{d}{2\sqrt t},
\frac{1}{\sqrt t}\bigl(|x_i-x_j|+\frac d2\bigr)\bigr],$$
$$1+\e_j R'\in\bigl[\frac34,\frac54\bigr],\quad \mbox{and}\quad
1+\e_j R_j=\frac{r_i}{r_j}+\frac{\sqrt t R}{r_j}\in\bigl[\frac{3r_i}{4r_j},\frac{5r_i}{4r_j}\bigr].$$
Using the above bounds and the fact that $F_1(s),~s^{\frac12} F_2(s)$ are bounded on $]0,\infty[$,
we achieve
\begin{equation*}
|U_{0,j}(X_j)|\leq C_0\int_{\Omega_\e^-}\sqrt t e^{-|X'|^2/4}\,dX'\leq C_0\sqrt t,
\end{equation*}
which completes the proof of this lemma.
\end{proof}

A direct consequence of this lemma is that, $W_{0,i}(t,X)\leq C_0 \sqrt t$ for any $X\in\Omega_\e^-$.
Using this point-wise bound and Cauchy inequality, we obtain
\begin{equation}\label{estimateIiDelta-}
\Bigl|\int_{\Omega_\e^-} W_{0,i}(t,X) \widetilde{f}_i^\Delta(t,X)\cdot
\bigl(\nabla_X+X/2\bigr)\widetilde{f}_i^\Delta(t,X)\cdot w(X)\,dX\Bigr|
\leq C_0 \sqrt t E_i^\Delta(t)^{\frac 12} \mathcal{E}_i^\Delta(t)^{\frac 12}.
 \end{equation}

To handle the integral on $\Omega_\e^+$, we need some more careful estimates on the rescaled velocity.
After the blow-up procedure \eqref{blowupotiui},
Proposition $2.3$ of \cite{GS15} translates into:
\begin{lem}
{\sl i) If $1<p<2<q<\infty,~\frac1q=\frac1p-\frac12$, then
\begin{equation}\label{GS15 prop2.3i}
\|BS^\e[f]\|_{L^q(\Omega_\e)}\leq C \|f\|_{L^p(\Omega_\e)}.
\end{equation}
ii) If $1\leq p<2<q\leq \infty$, then
\begin{equation}\label{GS15 prop2.3ii}
\|BS^\e[f]\|_{L^\infty(\Omega_\e)}\leq C \|f\|_{L^p(\Omega_\e)}^\sigma\|f\|_{L^q(\Omega_\e)}^{1-\sigma},
\quad\mbox{where}\quad \sigma=\frac p2\frac{q-2}{q-p}\in]0,1[.
\end{equation}
}\end{lem}

It follows from a mere application of \eqref{GS15 prop2.3ii} to a gaussian function that
\begin{equation}\label{boundW0i}
\|W_{0,i}\|_{L^{\infty}_T(L^\infty(\Omega_{\e}))}\leq C.
\end{equation}
And it follows from the Gaussian bound for $f_i^{(\ell)}$ \eqref{boundfi} and the fact that $f_0$ vanishes
on $\Omega_\e^+$ that, the same Gaussian bound also holds for $\widetilde{f}_i^{(\ell)}$, precisely
\begin{equation}\label{pointboundfitilde}
0<\widetilde{f}_i^{(\ell)}(t,X) \leq C_{\eta,\al}e^{ - \frac{1-\eta}{4}|X|^2},\quad\forall X\in\Omega_\e^+.
\end{equation}
Using the above bounds \eqref{boundW0i} and \eqref{pointboundfitilde} with $\eta=\frac14$, we get
\begin{align*}
\Bigl|\int_{\Omega_\e^+}W_{0,i}(t,X) \widetilde{f}_i^\Delta(t,X)\cdot
\bigl(\nabla_X+X/2\bigr)\widetilde{f}_i^\Delta(t,X)\cdot w(X)\,dX\Bigr|
& \leq C\|\widetilde{f}_i^\Delta(t)w^{\frac 12}\|_{L^2(\Omega_\e^+)} \mathcal{E}_i^\Delta(t)^{\frac 12}\\
&\leq C_0 e^{-\frac{d^2}{256t}}\mathcal{E}_i^\Delta(t)^{\frac 12}.
\end{align*}
Combining this with the estimate \eqref{estimateIiDelta-}, we finally get
\begin{equation}\label{estimateIiDelta}
|I_{i,1}^\Delta(t)|\leq  C_0 \sqrt t E_i^\Delta(t)^{\frac 12} \mathcal{E}_i^\Delta(t)^{\frac 12}
+C_0 e^{-\frac{d^2}{256t}}\mathcal{E}_i^\Delta(t)^{\frac 12}.
\end{equation}

\noindent\textbf{Type 2:} $I_{i,2}^\Delta(t)=\int_{\Omega_\e} \widetilde{W}_i^\Delta(t,X) f_0(t,X)\cdot
\bigl(\nabla_X+X/2\bigr)\widetilde{f}_i^\Delta(t,X)\cdot w(X)\,dX.$

Noting that $f_0$ supports only on $\Omega_\e^-$, and $f_0(X)w(X)\leq 1$ on $\Omega_\e$, we get
\begin{equation}\label{boundIIiDelta1}
|I_{i,2}^\Delta(t)|\leq\int_{\Omega_\e^-}\sum\limits_{j\neq i} 
\bigl|\al_j(\widetilde{U}_j^{(1)}-\widetilde{U}_j^{(2)})(t,X_j)\cdot
\bigl(\nabla_X+X/2\bigr)\widetilde{f}_i^\Delta(t,X)\bigr| \,dX.
\end{equation}

Let us decompose $\widetilde{U}_j^{(\ell)}$ as the sum of $\widetilde{U}_j^{(\ell),+}$
and $\widetilde{U}_j^{(\ell),-}$, with
$$\widetilde{U}_j^{(\ell),\pm}(X_j)
\eqdefa BS^{\e_j}\bigl[\widetilde{f}_j^{(\ell)}(X_j)
\mathbf{1}_{\Omega^\pm_{\e_j}}(X_j)\bigr],$$
where $\mathbf{1}_{\Omega^\pm_{\e}}$ stands for the characteristic function of $\Omega^\pm_{\e}$.

Exactly along the proof of Lemma \ref{lempointboundU0j}, we can get, for any $X\in\Omega_\e^-$, that
\begin{align*}
\bigl|\bigl(\widetilde{U}_j^{(1),-}-\widetilde{U}_j^{(2),-}\bigr)\bigl(X+\frac{x_i-x_j}{\sqrt t}\bigr)\bigr|
&\leq C_0\sqrt t\int_{\Omega_{\e_j}^-} \bigl|\widetilde{f}_j^{(1)}(X')-\widetilde{f}_j^{(2)}(X')\bigr|\,dX'\\
&\leq C_0\sqrt t \|w^{-1/2}\|_{L^2} E_j^\Delta(t)^{\frac 12}\\
&\leq C_0\sqrt t E_j^\Delta(t)^{\frac 12}.
\end{align*}
Using this bound and the fact that $L^2\bigl(\Omega_\e^-,w(X)dX\bigr)\hookrightarrow L^1(\Omega_\e^-,dX)$
, we achieve
\begin{equation}\begin{split}\label{boundIIiDelta1-}
\int_{\Omega_\e^-}\sum\limits_{j\neq i}&
\bigl|\al_j(\widetilde{U}_j^{(1),-}-\widetilde{U}_j^{(2),-})(t,X_j)\cdot
\bigl(\nabla_X+X/2\bigr)\widetilde{f}_i^\Delta(t,X)\bigr| \,dX\\
&\leq C_0\sqrt t E^\Delta(t)^{\frac 12}\mathcal{E}_i^\Delta(t)^{\frac 12}.
\end{split}\end{equation}

For $\widetilde{U}_j^{(\ell),+}$, we use \eqref{GS15 prop2.3i} with $p=4/3,~q=4$,
and H\"older's inequality to obtain
\begin{align*}
\bigl\|\widetilde{U}_j^{(1),+}-\widetilde{U}_j^{(2),+}\bigr\|_{L^4(\Omega_{\e_j})}
&\leq C_0\bigl\|\widetilde{f}_j^{(1)}-\widetilde{f}_j^{(2)}\bigr\|_{L^{\frac43}(\Omega_{\e_j}^+)}\\
&\leq C_0\|w^{-1/2}\|_{L^4(\Omega_{\e_j}^+)}
\bigl\|\bigl(\widetilde{f}_j^{(1)}-\widetilde{f}_j^{(2)}\bigr)w^{1/2}\bigr\|_{L^2(\Omega_{\e_j}^+)}\\
&\leq C_0e^{-C_0/t} E_j^\Delta(t)^{\frac 12}.
\end{align*}
Using this estimate and H\"older's inequality again, we achieve
\begin{equation}\begin{split}\label{boundIIiDelta1+}
\int_{\Omega_\e^-}\sum\limits_{j\neq i}&
\bigl|\al_j(\widetilde{U}_j^{(1),+}-\widetilde{U}_j^{(2),+})(t,X_j)\cdot
\bigl(\nabla_X+X/2\bigr)\widetilde{f}_i^\Delta(t,X)\bigr| \,dX\\
&\leq \sum\limits_{j\neq i}\bigl\|\widetilde{U}_j^{(1),+}-\widetilde{U}_j^{(2),+}\bigr\|_{L^4(\Omega_\e^-)}
\|w^{-1/2}\|_{L^4(\Omega_\e^-)}\bigl\|\bigl(\nabla_X+X/2\bigr)\widetilde{f}_i^\Delta
\cdot w^{1/2}\bigr\|_{L^2(\Omega_\e^-)}\\
&\leq C_0e^{-C_0/t} E^\Delta(t)^{\frac 12}\mathcal{E}_i^\Delta(t)^{\frac 12}.
\end{split}\end{equation}

Combining the estimates \eqref{boundIIiDelta1-} and \eqref{boundIIiDelta1+}, we finally achieve that
\begin{equation}\label{estimateIIiDelta}
|I_{i,2}^\Delta(t)|\leq C_0\sqrt t E^\Delta(t)^{\frac 12}\mathcal{E}_i^\Delta(t)^{\frac 12}. 
\end{equation}

\noindent\textbf{Type 3:} $I_{i,3}^\Delta(t)=\int_{\Omega_\e} \bigl(\widetilde{W}_i^{(1)} \widetilde{f}_i^{(1)}-\widetilde{W}_i^{(2)} \widetilde{f}_i^{(2)}\bigr)(t,X)\cdot
\bigl(\nabla_X+X/2\bigr)\widetilde{f}_i^\Delta(t,X)\cdot w(X)\,dX.$

The strategy of estimating $I_{i,3}^\Delta(t)$ is to write
$$\widetilde{W}_i^{(1)} \widetilde{f}_i^{(1)}-\widetilde{W}_i^{(2)} \widetilde{f}_i^{(2)}
= \widetilde{W}_i^\Delta \widetilde{f}_i^{(1)}+\widetilde{W}_i^{(2)} \widetilde{f}_i^\Delta,$$
where $\widetilde{W}_i^\Delta\eqdefa \widetilde{W}_i^{(1)}-\widetilde{W}_i^{(2)}$.
Then we get, by using H\"older's inequality, that
\begin{equation}\begin{split}\label{estimateIIIiDelta1}
|I_{i,3}^\Delta(t)|&\leq\bigl(\bigl\|\widetilde{W}_i^\Delta \bigr\|_{L^\infty(\Omega_\e)}
\bigl\|\widetilde{f}_i^{(1)}w^{\frac12} \bigr\|_{L^2(\Omega_\e)}
+\bigl\|\widetilde{W}_i^{(2)}\bigr\|_{L^\infty(\Omega_\e)}
\bigl\|\widetilde{f}_i^\Delta w^{\frac12} \bigr\|_{L^2(\Omega_\e)}\bigr)\\
&\qquad\qquad\qquad\qquad\qquad\qquad\qquad\qquad
\times\bigl\|\bigl(\nabla_X+X/2\bigr)\widetilde{f}_i^\Delta w^{\frac12}\bigr\|_{L^2(\Omega_\e)}\\
&\leq\bigl(\bigl\|\widetilde{W}_i^\Delta \bigr\|_{L^\infty(\Omega_\e)}E_i^{(1)}(t)^{\frac12}
+\bigl\|\widetilde{W}_i^{(2)}\bigr\|_{L^\infty(\Omega_\e)}E_i^\Delta(t)^{\frac 12}\bigr)
\mathcal{E}_i^\Delta(t)^{\frac 12}.
\end{split}\end{equation}
By using \eqref{GS15 prop2.3ii} with $p=4/3,~q=4$, and Gagliardo-Nirenberg inequality, we obtain
\begin{align*}
\bigl\|\widetilde{W}_i^\Delta \bigr\|_{L^\infty(\Omega_\e)}
&\leq C_0\sum\limits_{j\neq i}\bigl\|\widetilde{f}_j^\Delta\bigr\|_{L^{4/3}(\Omega_\e)}^{1/2}
\bigl\|\widetilde{f}_j^\Delta\bigr\|_{L^{4}(\Omega_\e)}^{1/2}\\
&\leq C_0\sum\limits_{j\neq i}\bigl\|\widetilde{f}_j^\Delta w^{1/2}\bigr\|_{L^{2}(\Omega_\e)}^{1/2}
\bigl\|w^{-1/2}\bigr\|_{L^{2}(\Omega_\e)}^{1/2}
\bigl\|\widetilde{f}_j^\Delta\bigr\|_{L^2(\Omega_\e)}^{1/4}
\bigl\|\nabla\widetilde{f}_j^\Delta\bigr\|_{L^2(\Omega_\e)}^{1/4}\\
&\leq C_0\sum\limits_{j\neq i}E_j^\Delta(t)^{\frac 38}
\mathcal{E}_j^\Delta(t)^{\frac 18}.
\end{align*}
Similarly, and noting that $\widetilde{f}_j^{(2)}$ satisfies the point-wise estimate
\eqref{pointboundfitilde}, we obtain
\begin{align*}
\bigl\|\widetilde{W}_i^{(2)}\bigr\|_{L^\infty(\Omega_\e)}
&\leq C_0\sum\limits_{j\neq i}\bigl\|\widetilde{f}_j^{(2)}\bigr\|_{L^{4/3}(\Omega_\e)}^{1/2}
\bigl\|\widetilde{f}_j^{(2)}\bigr\|_{L^{4}(\Omega_\e)}^{1/2}\\
&\leq C_0\sum\limits_{j\neq i}E_j^{(2)}(t)^{\frac14}.
\end{align*}
Substituting the above two estimates into \eqref{estimateIIIiDelta1}, we achieve
\begin{equation}\begin{split}\label{estimateIIIiDelta}
|I_{i,3}^\Delta(t)|\leq C_0\Bigl(E_i^{(1)}(t)^{\frac12}E^\Delta(t)^{\frac 38}\mathcal{E}^\Delta(t)^{\frac 18}
+E^{(2)}(t)^{\frac 14}E_i^\Delta(t)^{\frac 12}\Bigr)
\mathcal{E}_i^\Delta(t)^{\frac 12}.
\end{split}\end{equation}

Overall, by putting \eqref{estimateIiDelta},~\eqref{estimateIIiDelta} and \eqref{estimateIIIiDelta}
together, using Young's inequality and the fact that
 $E_i^\Delta\leq \mathcal{E}_i^\Delta\leq \mathcal{E}^\Delta$, we achieve
\begin{equation}\label{ineqIDelta}
I^\Delta(t)\leq \delta\mathcal{E}_i^\Delta(t)
+C_0\bigl(\sqrt t
+E_i^{(1)}(t)^{\frac12}+E^{(2)}(t)^{\frac 14}\bigr)\mathcal{E}^\Delta(t)
+C_0e^{-C_0/t}.
\end{equation}
Then substituting \eqref{ineqADelta} and \eqref{ineqIDelta} into \eqref{equalitydtEi},
and summing up over $i$, leads to
\begin{equation}\label{equalitydtE}
t\frac{d}{dt}E^\Delta(t)\leq -\delta\mathcal{E}^\Delta(t)+C_0\bigl(\sqrt t
+E^{(1)}(t)^{\frac12}+E^{(2)}(t)^{\frac12}+E^{(2)}(t)^{\frac 14}\bigr)\mathcal{E}^\Delta(t)
+C_0e^{-C_0/t}.
\end{equation}
The bound \eqref{rateEi} guarantees the existence of a positive time $t_1$, such that
for all $0<t<t_1$, there holds
$C_0\bigl(\sqrt t+E^{(1)}(t)^{\frac12}+E^{(2)}(t)^{\frac12}+E^{(2)}(t)^{\frac 14}\bigr)\leq \frac{\delta}2.$
Then \eqref{equalitydtE} turns into
\begin{equation}\label{ineqdtEDelta}
t\frac{d}{dt}E^\Delta(t)\leq -\frac\delta{2}\mathcal{E}^\Delta(t)+C_0e^{-C_0/t}
\leq -\frac\delta{2} E^\Delta(t)+C_0e^{-C_0/t}.
\end{equation}
Then integrating this differential inequality from $0$ to $t<t_1$ gives
$$E^\Delta(t)\leq C_0 t^{-\delta/2}\int_0^t s^{\delta/2-1}e^{-C_0/s}\,ds
\leq e^{-C_0/t},$$
which is exactly the desired estimate \eqref{estimateEDelta}.
\end{proof}

Proposition \ref{propEdifference} already shows that $E^\Delta(t)$ converges extremely rapidly to $0$
as $t\rightarrow0$, but our actual goal is to prove that $E^\Delta(t)$ vanishes identically,
which will be done in the following.

\begin{proof}[Proof of the uniqueness result in Theorem \ref{thmmain}]
The key is to get a new differential inequality for $E^\Delta(t)$ like \eqref{ineqdtEDelta},
but in which the ``inhomogeneous"
term like $C_0e^{-C_0/t}$ does not appear.

First, the estimate $(4.73)$ of \cite{GS16} claims that as long as $\e<1/2$, there holds
\begin{equation}\label{ineqAhomo}
A_i^\Delta(t)\leq-\delta\mathcal{E}_i^\Delta(t)+C_0E_i^\Delta(t)
+C_0 \bigl(E_i^{(1)}(t)^{\frac 12}+E_i^{(2)}(t)^{\frac 12}\bigr)
\mathcal{E}_i^\Delta(t).
\end{equation}

For the estimate of $I_i^\Delta(t)$, we only need to modify the estimate of $I_{i,1}^\Delta(t)$.
By simply using the bound for $U_i$ given by \eqref{boundUi}, we can achieve
$$|I_{i,1}^\Delta(t)|\leq C_0 E_i^\Delta(t)^{\frac 12}\mathcal{E}_i^\Delta(t)^{\frac 12}.$$
The other terms in $I_i^\Delta(t)$ can be estimated exactly along the proof of
Proposition \ref{propEdifference}. Then for small $t$, we deduce 
\begin{equation}\begin{split}\label{ineqIhomo}
|I_i^\Delta(t)|&\leq C_0 E^\Delta(t)^{\frac 12}\mathcal{E}^\Delta(t)^{\frac 12}
+C_0\bigl(E_i^{(1)}(t)^{\frac12}+E^{(2)}(t)^{\frac 14}\bigr)\mathcal{E}^\Delta(t)\\
&\leq \frac{\delta}{2n}\mathcal{E}^\Delta(t)+ C_0 E^\Delta(t)
+C_0\bigl(E_i^{(1)}(t)^{\frac12}+E^{(2)}(t)^{\frac 14}\bigr)\mathcal{E}^\Delta(t).
\end{split}\end{equation}
Substituting \eqref{ineqAhomo} and \eqref{ineqIhomo} into \eqref{equalitydtEi},
and summing up over $i$, leads to
\begin{equation}\label{ineqdtEhomo}
t\frac{d}{dt} E^\Delta(t)\leq -\frac{\delta}{2}\mathcal{E}^\Delta(t)+ C_0 E^\Delta(t)
+C_0\bigl(E^{(1)}(t)^{\frac12}+E^{(2)}(t)^{\frac12}+E^{(2)}(t)^{\frac 14}\bigr)\mathcal{E}^\Delta(t).
\end{equation}
The bound \eqref{rateEi} guarantees the existence of a positive time $t_2$, such that
for all $0<t<t_2$, there holds
$C_0\bigl(\sqrt t+E^{(1)}(t)^{\frac12}+E^{(2)}(t)^{\frac12}+E^{(2)}(t)^{\frac 14}\bigr)\leq \frac{\delta}2.$
Then \eqref{ineqdtEhomo} turns into
$$t\frac{d}{dt} E^\Delta(t)\leq C_0 E^\Delta(t),$$
hence
\begin{equation}\label{ineqEhomo}
E^\Delta(t)\leq\left(\frac{t}{t'}\right)^{C_0}E^\Delta(t'),\quad\forall 0<t'<t.
\end{equation}
In view of \eqref{estimateEDelta}, the right-hand side of \eqref{ineqEhomo} converges to $0$ as
$t'\rightarrow0$. Thus $E^\Delta(t)=0$, which means that $f^{(1)}(t)=f^{(2)}(t)$ for all $0<t<\min(t_1,t_2)$.
Returning to the original variables, we conclude that $\oto(t)=\ott(t)$ for all $0<t<\min(t_1,t_2)$.
Then the desired uniqueness follows from the global well-posedness result established in Theorem $1.1$
of \cite{GS15}, and the whole theorem has been proved.
\end{proof}

\smallskip
\noindent {\bf Acknowledgements.}
The first author (G.L.) warmly thanks Ping Zhang of the Chinese Academy of Sciences for his generous invitation to collaborate and fruitful discussions which led to the present paper.
The hospitality of the Morningside center of mathematics is also gratefully acknowledged.


\begin{thebibliography}{00}

\bibitem{AS89}
A. Ambrosetti and M. Struwe,
{Existence of steady vortex rings in an ideal fluid},
   {\it Arch. Rational Mech. Anal.},
   {\bf 108} (1989),
 pages 97-109.

 \bibitem{Aronson}
 D. G. Aronson, \textsl{Bounds for the fundamental solution of a parabolic equation}, Bull. Amer. math. Soc. 73, 890-896 (1967)

 \bibitem{FengSverak}
 H. Feng and V. \u{S}ver\'ak, \textsl{On the Cauchy problem for axi-symmetric vortex rings}, Arch. Rat. Mat. Anal., 215, 1, 89-123 (2015)

\bibitem{F70}
L. E. Fraenkel,
{On steady vortex rings of small cross-section in an ideal fluid},
   {\it Proc. Roy. Soc. London A},
   {\bf 316} (1970),
 pages 29-62.

\bibitem{FB74}
L. E. Fraenkel and M. S. Berger,
{A global theory of steady vortex rings in an ideal fluid},
   {\it Acta Math.},
   {\bf 132} (1974),
 pages 14-51.

\bibitem{FT81}
A. Friedman and B. Turkington,
{Vortex rings: existence and asymptoric estimates},
   {\it Transactions of the AMS},
   {\bf 268} (1981),
 pages 14-51-37.

 \bibitem{GallagherGallay}
 I. Gallagher and T. Gallay, \textsl{Uniqueness for the two-dimensional Navier-Stokes equation with a measure as initial vorticity}, Math. Ann. 332, 287-327 (2005)

 \bibitem{GallagherGallayLions}
 I. Gallagher, T. Gallay and P.-L. Lions, \textsl{On the uniqueness of the solution of the twi-dimensional Navier-Stokes equation with a Dirac mass as initial vorticity},
 Math. Nachr. 278, 1665-1772 (2005)

 \bibitem{GS15}
 T. Gallay and V. \u{S}ver\'ak, \textsl{Remarks on the Cauchy problem for the axisymmetric Navier-Stokes equations}, Confluentes Mathematici 7, 67-92 (2015)

 \bibitem{GS16}
 T. Gallay and V. \u{S}ver\'ak, \textsl{Uniqueness of axisymmetric viscous flows originating from circular vortex filaments}, arXiv:1609.02030 [math.AP]

 \bibitem{GallayWayne}
 T. Gallay and C.E. Wayne, \textsl{Global stability of vortex solutions of the two-dimensional Navier-Stokes equation}, Comm. Math. Phys. 255 97-129 (2005)


\end{thebibliography}
\end{document}